\theoremstyle{plain}
\newtheorem{thm}{Theorem}[section]
\newtheorem{cor}[thm]{Corollary}
\newtheorem{lem}[thm]{Lemma}
\newtheorem{prop}[thm]{Proposition}
\theoremstyle{definition}
\newtheorem*{rmq}{Remark}
\newcommand{\ho}{\mathrm{H}}
\newcommand{\Hom}{\mathrm{Hom}}
\newcommand{\Oc}{\mathcal{O}}
\newcommand{\li}{\mathcal{L}}
\newcommand{\mi}{\mathcal{M}}
\newcommand{\Z}{\mathbb{Z}}
\newcommand{\bbC}{\mathbb{C}}
\newcommand{\pr}{\mathbb{P}}
\newcommand{\F}{\mathbb{F}}
\newcommand{\Ker}{\mathrm{Ker}}
\setlist[enumerate]{nosep}
\begin{document}

\title{
	Smoothing of 1-cycles over finite fields
	}
\author{Xiaozong WANG}
\address{Morningside Center of Mathematics, Chinese Academy of Sciences, No.55, Zhongguancun East Road, Beijing, 100190, China}
\email{xiaozong.wang@amss.ac.cn}

\begin{abstract}
	Let $X$ be a smooth projective variety defined over a finite field. We show that any algebraic $1$-cycle on $X$ is rationally equivalent to a smooth $1$-cycle, which is a $\mathbb{Z}$-linear combination of smooth curves on $X$. We also prove a generalized version of Poonen's Bertini theorem over finite fields. Given a very ample line bundle $\mathcal{L}$ on $X$ and an arbitrary line bundle $\mathcal{M}$, this version implies the existence of a global section of $\mathcal{M}\otimes \mathcal{L}^{\otimes d}$ for sufficiently large $d$ whose divisor is smooth.
\end{abstract}

\maketitle


\section{Introduction}
\subsection{Main result}
	Let $X$ be a smooth projective variety defined over a field $k$. We say that an algebraic $d$-cycle $\sum_{i}a_iZ_i$ on $X$ is \emph{smooth} if each $Z_i$ is the class of a smooth subvariety of $X$ of dimension $d$. 
	
	Over a field of characteristic $0$, given an algebraic cycle of a smooth projective variety, the question on the existence of rationally equivalent smooth algebraic cycles is considered by Hironaka. In \cite{Hi68}, he proved that on a smooth projective variety $X$ of dimension $n$, every algebraic cycle of dimension at most $\min(3,\frac{n-1}{2})$ is rationally equivalent to a smooth one. In particular, it is true for $1$-cycles whenever $X$ is of dimension at least $3$. (The case when $\dim X=2$ can be proved directly via divisors.) His method relies on blowing-ups, a thorough study on the size of singular loci when pushing forward a cycle by a birational map, and also properties of general intersections of several hypersurfaces of $X$. In particular, his proof needs desingularization of subvarieties of $X$.
	
	In this paper, we prove the analogue of Hironaka's result on $1$-cycles over finite fields. Let $\F_q$ be a finite field of $q$ elements and of characteristic $p$. Our main result is the following theorem :
	\begin{thm}\label{1cycle}
		Let $X$ be a smooth projective variety over $\F_q$ of dimension $n\geq 2$. Then every algebraic $1$-cycle on $X$ is rationally equivalent to a smooth $1$-cycle, i.e. for any algebraic $1$-cycle $\sum_i a_iZ_i$ on $X$, where each $Z_i$ is the class of a closed curve on $X$ and $a_i\in \Z$, we can find another $1$-cycle $\sum_j b_jZ'_j$ such that 
		\[
			\sum_j b_jZ'_j\sim_{\mathrm{rat}} \sum_i a_iZ_i, 
		\]
		and that each $Z'_j$ is the class of a smooth closed curve on $X$.
	\end{thm}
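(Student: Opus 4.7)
The plan is to reduce Theorem~\ref{1cycle} to the generalized Poonen Bertini result announced in the abstract, via a linkage (residual intersection) argument. By $\mathbb{Z}$-linearity it suffices to treat a single prime $1$-cycle, so let $Z\subset X$ be an integral closed curve. The case $n=2$ is handled directly by Poonen's Bertini theorem applied to divisors (any effective divisor $D$ becomes rationally equivalent to a smooth divisor after adding a smooth section of $\mathcal{O}(D)\otimes\mathcal{L}^{\otimes d}$ for $d$ large), so assume $n\geq 3$.

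Fix a very ample line bundle $\mathcal{L}$ on $X$. For $d\gg 0$ the twisted ideal sheaf $\mathcal{I}_Z\otimes\mathcal{L}^{\otimes d}$ is globally generated. I would pick $n-1$ sections $s_1,\dots,s_{n-1}\in H^0(X,\mathcal{I}_Z\otimes\mathcal{L}^{\otimes d})$, viewed as sections of $\mathcal{L}^{\otimes d}$ vanishing on $Z$, with divisors $D_1,\dots,D_{n-1}$ whose scheme-theoretic complete intersection decomposes as a cycle $Z+Z'$ with $Z'$ a smooth curve. Separately, an iterated application of Poonen's original Bertini theorem produces sections $s_1',\dots,s_{n-1}'$ of $\mathcal{L}^{\otimes d}$ whose divisors meet transversally in a smooth complete intersection curve $Y_0$. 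Both $Z+Z'$ and $Y_0$ represent the class $c_1(\mathcal{L}^{\otimes d})^{n-1}\cap[X]$, so in $\mathrm{CH}_1(X)$ one obtains
\[
    [Z] \;=\; [Y_0]-[Z'],
\]
a $\mathbb{Z}$-linear combination of smooth curves.

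The construction of the $s_i$'s is where the generalized Poonen Bertini is used. Applied to the sheaf $\mathcal{I}_Z\otimes\mathcal{L}^{\otimes d}$, it should provide a positive density of sections whose divisor is smooth away from $Z$ and which, by prescribing $1$-jet conditions at finitely many closed points of $Z$ (in particular the singular points of $Z$), can be arranged so that the residual curve $Z'$ is smooth. I would perform the slicing inductively: choose $s_1$ so that $D_1$ is smooth off $Z$ and has prescribed tangent behaviour at $\mathrm{Sing}(Z)$; then restrict to $D_1$ and apply the generalized Bertini to $\mathcal{I}_{Z|D_1}\otimes(\mathcal{L}^{\otimes d}|_{D_1})$ to choose $s_2$, and continue.

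The main obstacle is the local smoothness of $Z'$ at the points $p\in\mathrm{Sing}(Z)$. There, asking $Z'$ to be smooth amounts to imposing a transversality condition on the $(n-1)$-tuple of cotangent vectors $(ds_1(p),\dots,ds_{n-1}(p))$ relative to the branches of $Z$. One must show that a positive density of tuples of sections of $H^0(X,\mathcal{I}_Z\otimes\mathcal{L}^{\otimes d})$ achieves all these conditions simultaneously while remaining smooth off $Z$; this is where the full-strength generalized Poonen Bertini, combined with a careful residual-intersection analysis, must be invoked. If a single step fails to make $Z'$ everywhere smooth, a Noetherian induction on a suitable invariant of $\mathrm{Sing}(Z)$ (for instance the total Hilbert--Samuel multiplicity) would complete the argument.
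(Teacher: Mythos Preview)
Your overall linkage framework is the same as the paper's, but the execution at the singular points of $Z$ has a genuine gap, and in fact the paper takes a different route precisely to avoid it.

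First, the generalized Bertini theorem proved in this paper (Theorem~\ref{generalbertini} and Corollary~\ref{generalbertinisingularity}) is stated only for a \emph{line bundle} $\mathcal{M}$, not for an arbitrary coherent sheaf. When $n\geq 3$ the ideal sheaf $\mathcal{I}_Z$ is not invertible, so ``applying the generalized Bertini to $\mathcal{I}_Z\otimes\mathcal{L}^{\otimes d}$'' is not available. The relevant tool for sections vanishing on $Z$ is instead Poonen's \cite{Po08} result (Theorem~\ref{poonen} here), which gives smoothness of $\mathrm{div}\sigma\cap X_0$ but says nothing about the residual curve.

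Second, and more seriously, your plan to force $Z'$ to be smooth at $p\in\mathrm{Sing}(Z)$ by prescribing $1$-jets of the $s_i$ cannot succeed in general. If the embedding dimension of $Z$ at $p$ equals $n$ then $\mathcal{I}_{Z,p}\subset\mathfrak m_{X,p}^2$, so every $s_i\in H^0(X,\mathcal{I}_Z\otimes\mathcal{L}^{\otimes d})$ has $ds_i(p)=0$; no transversality condition can be imposed, and the complete intersection $D_1\cap\cdots\cap D_{n-1}$ is automatically singular at $p$. The paper explicitly flags this obstruction (citing \cite{Gu17}, \cite{KA79}) and does \emph{not} attempt to control $Z'$ at $\mathrm{Sing}(Z)$.

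What the paper does instead is this. Using \cite{Po08} it chooses the first $n-2$ hypersurfaces so that the intersection $S=\mathrm{div}\sigma_1\cap\cdots\cap\mathrm{div}\sigma_{n-2}$ is a surface containing $Z$ and smooth away from $\mathrm{Sing}(Z)$. On $S$ the curve $Z$ is Cartier (the surface is smooth in codimension~$1$), so $\mathcal{O}_S(-Z)$ is a line bundle, and now Corollary~\ref{generalbertinisingularity} applies with $\mathcal{M}=\mathcal{O}_S(-Z)$ and with the finite avoidance set $\mathrm{Sing}(Z)$: one obtains $\beta\in H^0(S,\mathcal{O}_S(-Z)\otimes\mathcal{L}^{\otimes d_{n-1}})$ with $Z_\beta:=\mathrm{div}\beta$ smooth and \emph{disjoint from} $\mathrm{Sing}(Z)$. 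Multiplying by the defining section of $Z$ gives a section of $\mathcal{L}^{\otimes d_{n-1}}|_S$ with divisor $Z\cup Z_\beta$, which lifts to $X$. The residual $Z'=Z_\beta$ is smooth simply because it never meets the bad locus. No jet conditions at $\mathrm{Sing}(Z)$ and no Noetherian induction are needed; the whole point of the twist by $\mathcal{O}(-Z)$ and the avoidance clause in Corollary~\ref{generalbertinisingularity} is to sidestep the obstruction you ran into.
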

	In the finite field case, we still follow Hironaka's idea to construct rationally equivalent $1$-cycles via intersections of divisors of sufficiently high degree. But as we only have finitely many divisors of any fixed degree, we cannot apply Hironaka's method of studying the property of the intersection of general hypersurfaces. Instead, we use Poonen's closed point sieve method to find divisors we need. Our main tools are a result of Poonen on the density of hypersurface sections containing a given subscheme in \cite{Po08}, and a generalization of his Bertini theorem over finite field in \cite{Po04}, which is stated below.

	For a scheme $X$ over $\F_q$, the zeta function $\zeta_X$ of $X$ is defined by the infinite product
	\[
		\zeta_X(s)=\prod_{x\in \lvert X\rvert} \left( 1-q^{-s\deg x} \right)^{-1}
	\]
	This product converges for any $s\in \bbC$ such that $\mathrm{Re}(s)>\dim X$.
	\begin{thm}\label{generalbertini}
	Let $X$ be a projective variety of dimension $n\geq 1$ over $\F_q$. Let $\li$ be a very ample line bundle on $X$ and $\mi$ an arbitrary line bundle on $X$. Assume that there exists a closed subscheme $Y$ of $X$ such that $X_0:=X-Y$ is smooth. Define 
	\[
		\mathcal{P}_{d}:=\left\{ \sigma\in \ho^0(X, \mi\otimes \li^{\otimes d})\ ;\ 
		\mathrm{div}\sigma\cap X_0 \text{ is smooth of dimension } n-1 
		\right\},
	\]
	and $\mathcal{P}=\bigcup_{d\geq 0}\mathcal{P}_{d}$. Then we have
	\[
		\mu(\mathcal{P}):=\lim_{d\rightarrow \infty}\frac{\#\mathcal{P}_{d}}{\#\ho^0(X, \mi\otimes \li^{\otimes d})}=\zeta_{X_0}(n+1)^{-1}.
	\]
\end{thm}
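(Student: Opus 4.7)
The plan is to adapt Poonen's closed point sieve from \cite{Po04} to the setting where the line bundle is $\mi\otimes\li^{\otimes d}$ and smoothness is required only on the open subvariety $X_0 = X - Y$. Smoothness of $\mathrm{div}\,\sigma$ at a point $x \in X_0$ is a local condition depending only on the image of $\sigma$ in the second-order infinitesimal neighborhood of $x$, and the final density $\mu(\mathcal{P})$ will be realized as an infinite product $\prod_{x\in |X_0|}(1 - q^{-(n+1)\deg x}) = \zeta_{X_0}(n+1)^{-1}$, in which each point of $X_0$ contributes an independent local factor; since no smoothness constraint is imposed at points of $Y$, these points do not appear in the product.

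For the low-degree contribution, fix $r \geq 1$ and let $Z_r \subset X_0$ denote the reduced closed subscheme whose underlying set is the set of closed points of $X_0$ of degree at most $r$. By Serre vanishing applied to $\mi\otimes\li^{\otimes d}$, for $d$ large depending on $r$ the restriction map
\[
\ho^0(X, \mi\otimes\li^{\otimes d}) \longrightarrow \ho^0\!\left(X, (\mi\otimes\li^{\otimes d}) \otimes \Oc_X / \mathcal{I}_{Z_r}^{2}\right)
\]
is surjective. Since failure of smoothness at a closed point $x$ of degree $e$ is a codimension-$(n+1)e$ linear condition on the $1$-jet at $x$, a Chinese-Remainder-Theorem style counting argument applied to this surjection yields
\[
\lim_{d\to\infty} \frac{\#\left\{\sigma \in \ho^0(X, \mi\otimes\li^{\otimes d}) : \mathrm{div}\,\sigma \text{ is smooth at every point of } Z_r\right\}}{\# \ho^0(X, \mi\otimes\li^{\otimes d})} = \prod_{\substack{x \in |X_0|\\ \deg x \leq r}}\left(1 - q^{-(n+1)\deg x}\right).
\]
As $r \to \infty$, this product converges to $\zeta_{X_0}(n+1)^{-1}$.

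It then remains to prove that the density of sections $\sigma$ for which $\mathrm{div}\,\sigma \cap X_0$ fails to be smooth at some closed point $x \in X_0$ with $\deg x > r$ tends to $0$ as $r \to \infty$, uniformly in large $d$. Following \cite{Po04} I would split this into medium-degree and high-degree ranges. For medium-degree points, an elementary union bound combined with standard estimates on $\#\{x \in |X_0| : \deg x = e\}$ and the codimension-$(n+1)e$ bound at a single point yields the required decay. The high-degree range is the main technical obstacle, and requires Poonen's decoupling argument: one writes a general section as $\sigma = \sigma_0 + \sum_i \tau_i f_i^p$ with auxiliary sections $f_i$ vanishing on suitably chosen low-dimensional subvarieties of $X_0$, the $p$-th powers ensuring that the partial derivatives of $\sigma$ at $x$ are decoupled from its value. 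The additional ingredients our generalization demands are (i) that for $d$ sufficiently large, $\mi\otimes\li^{\otimes d}$ is very ample with good enough jet-generation properties so that $\mi$ can be absorbed into the twist without affecting Poonen's estimates, and (ii) that the auxiliary cutting subvarieties be chosen inside $X_0$ so that the bad locus stays away from $Y$. Both points can be handled by invoking surjectivity and very ampleness of twisted bundles for $d\gg 0$, and working throughout on the quasi-projective scheme $X_0$ rather than all of $X$.
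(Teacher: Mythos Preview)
Your three-part sieve (low/medium/high degree) is exactly the paper's strategy, and your treatment of the low- and medium-degree ranges matches the paper's Lemmas~\ref{small} and~\ref{med}. The one place where your outline is too loose is point~(i) of the high-degree step: saying that ``$\mi$ can be absorbed into the twist'' via very ampleness of $\mi\otimes\li^{\otimes d}$ does not by itself resolve the issue, because Poonen's decoupling requires $p$-th powers of auxiliary sections, and $\mi$ has no $p$-th root in general, so one cannot simply replace $\li$ by $\mi\otimes\li^{\otimes d_0}$ and quote \cite{Po04}. The paper's actual mechanism is to work on a finite open cover $\{U\}$ of $X_0$ on which $\mi$ admits a trivialization $\psi_U:\mi|_U\to\Oc_U$ and $\Omega^1_{X_0/\F_q}$ is freely generated by $\mathrm{d}t_1,\dots,\mathrm{d}t_n$, with $U=X\setminus\mathrm{div}\,\tau$ for some $\tau\in\ho^0(X,\li^{\otimes N_1})$; one then writes a random section as
\[
\sigma=\sigma_0+\sum_{i=1}^n\tau^{d_4}t_i\psi_U^{-1}(\beta_i^p)+\tau^{d_4}\psi_U^{-1}(\gamma^p),\qquad \beta_i,\gamma\in\ho^0(X,\li^{\otimes k_d}),
\]
where the exponent $d_4$ is chosen so that $\tau^{d_4}\psi_U^{-1}$ and $\tau^{d_4}t_i\psi_U^{-1}$, a priori defined only on $U$, extend to global morphisms $\li^{\otimes pk_d}\to\mi\otimes\li^{\otimes d}$. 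The derivations $\partial_j$ are likewise globalized to operators $D_j$ by multiplying by further powers of $\tau$. With this set up, the decoupling and the B\'ezout-type counting of Lemmas~\ref{need1} and~\ref{need2} run exactly as in \cite{Po04}. This local-trivialization-plus-extension device is the only genuinely new technical ingredient needed for nontrivial $\mi$; once you supply it, your outline coincides with the paper's proof.
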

\begin{rmq}
	When $\mi$ is trivial, this is Theorem 1.1 in \cite{Po04} for the quasi-projective scheme $X_0$.
\end{rmq}

What we need in the proof of Theorem \ref{1cycle} is a variant of this theorem :
\begin{cor}\label{generalbertinisingularity}
	Let $X$ be a projective variety of dimension $n\geq 1$ over $\F_q$. Let $\li$ be a very ample line bundle on $X$ and $\mi$ an arbitrary invertible sheaf on $X$. Assume that there exists a closed subscheme $Y$ of $X$ such that $X_0:=X-Y$ is smooth. 
	For any finite subscheme $S$ of $Y$, define
		\[
			\mathcal{P}_{S,d}:=\left\{ \sigma\in \ho^0(X, \mi\otimes \li^{\otimes d})\ ;\ 
			\begin{array}{ll}
			\mathrm{div}\sigma\cap X_0 \text{ is smooth of dimension } n-1, \\
			\mathrm{div}\sigma\cap S=\emptyset
			\end{array}
			\right\},
		\]
		and $\mathcal{P}_S=\bigcup_{d\geq 0}\mathcal{P}_{S,d}$. Then we have
		\[
			\mu(\mathcal{P}_S)=\zeta_{X_0}(n+1)^{-1}\cdot\prod_{x\in \lvert S\rvert} \left( 1-\#\kappa(x)^{-1}\right).
		\]
		In particular, we have $\mu(\mathcal{P}_S)>0$, and $\mathcal{P}_{S,d}\not=\emptyset$ when $d\gg 1$.
\end{cor}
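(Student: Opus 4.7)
The strategy is to observe that the two conditions defining $\mathcal{P}_{S,d}$, namely ``$\mathrm{div}\sigma \cap X_0$ is smooth'' and ``$\mathrm{div}\sigma \cap S = \emptyset$'', impose local constraints at the disjoint loci $|X_0|$ and $|S|$, and should therefore be asymptotically independent. Write $V_d := \ho^0(X,\mi\otimes\li^{\otimes d})$ and let $\rho_d : V_d \to \ho^0(S,(\mi\otimes\li^{\otimes d})|_S)$ be the restriction map. Since $\li$ is very ample and $S$ is finite, Serre vanishing applied to the twist of the ideal sheaf $\mathcal{I}_S$ gives $\ho^1(X,\mathcal{I}_S\otimes\mi\otimes\li^{\otimes d})=0$ for $d\gg 0$, so $\rho_d$ is surjective. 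Its target decomposes as $\bigoplus_{x\in |S|}(\mi\otimes\li^{\otimes d})|_{S_x}$, each summand a free $\Oc_{S_x}$-module of rank one; the condition $\mathrm{div}\sigma\cap S=\emptyset$ is equivalent to $\rho_d(\sigma)$ being a unit in every component, which holds for a fraction $\prod_{x\in|S|}(1-\#\kappa(x)^{-1})$ of the image (and hence of $V_d$).

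The next step is to combine this with the smoothness condition by fibring $V_d$ over $\ho^0(S,(\mi\otimes\li^{\otimes d})|_S)$ via $\rho_d$ and proving that, for every unit $u$ in the target, the density of $\sigma\in\rho_d^{-1}(u)$ with $\mathrm{div}\sigma\cap X_0$ smooth tends to $\zeta_{X_0}(n+1)^{-1}$ as $d\to\infty$. This amounts to rerunning the closed-point sieve underlying Theorem \ref{generalbertini} on the coset $\rho_d^{-1}(u)$ in place of the ambient space $V_d$. The inputs of the sieve are surjectivity statements of the form: for any finite $T\subset |X_0|$ and any finite-order infinitesimal neighbourhood $Z_x$ of each $x\in T$, the map $V_d \to \bigoplus_{x\in T}(\mi\otimes\li^{\otimes d})|_{Z_x}$ is surjective once $d$ is large. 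Because $S$ and $X_0$ are disjoint, Serre vanishing applied to $\mathcal{I}_S\cdot\prod_{x\in T}\mathcal{I}_{Z_x}$ shows that the joint map
\[
V_d \longrightarrow \ho^0(S,(\mi\otimes\li^{\otimes d})|_S)\oplus\bigoplus_{x\in T}(\mi\otimes\li^{\otimes d})|_{Z_x}
\]
is also surjective for $d\gg 0$. Consequently, conditioning on $\rho_d(\sigma)=u$ does not alter any local density at points of $X_0$, and the sieve yields the same limit $\zeta_{X_0}(n+1)^{-1}$ on every coset.

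Summing over units $u$ and dividing by $\#V_d$ produces the announced product formula. Positivity of $\mu(\mathcal{P}_S)$, and hence $\mathcal{P}_{S,d}\neq\emptyset$ for $d\gg 0$, is automatic because $\zeta_{X_0}(n+1)$ is a convergent Euler product and each factor $1-\#\kappa(x)^{-1}$ is strictly positive. The main technical obstacle is controlling the error terms of Poonen's sieve (notably the medium-degree partial-derivative estimate and the high-degree tail bound) uniformly across the cosets $\rho_d^{-1}(u)$; this is precisely what the joint surjectivity above secures, and ultimately it rests on the disjointness $S\cap X_0=\emptyset$.
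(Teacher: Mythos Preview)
Your approach is correct in outline but differs from the paper's, and one step needs a sharper justification. The paper does not fibre over $\rho_d$ at all. Instead it modifies only the \emph{low-degree} lemma: in place of the finite scheme $X'_{\leq r}=\coprod_{\deg x\leq r} x'$ it uses $X'_{\leq r}\sqcup\coprod_{x\in|S|}x$, and a single surjectivity of restriction (your joint map, for one fixed $r$) gives directly
\[
\mu(\mathcal{P}_{S,\leq r})=\prod_{\substack{x\in|X_0|\\ \deg x\leq r}}\bigl(1-q^{-(n+1)\deg x}\bigr)\cdot\prod_{x\in|S|}\bigl(1-q^{-\deg x}\bigr).
\]
The medium- and high-degree error sets $\mathcal{Q}^{\mathrm{med}}_{r,d}$ and $\mathcal{Q}^{\mathrm{high}}_d$ are defined purely in terms of singularities at points of $X_0$ and make no reference to $S$; hence the sandwich
\[
\mathcal{P}_{S,d}\ \subset\ \mathcal{P}_{S,\leq r,d}\ \subset\ \mathcal{P}_{S,d}\cup\mathcal{Q}^{\mathrm{med}}_{r,d}\cup\mathcal{Q}^{\mathrm{high}}_d
\]
holds verbatim, and the limits from Theorem~\ref{generalbertini} finish the proof with no uniformity-over-cosets argument needed.

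Your fibering argument also works, but the claim that ``joint surjectivity secures'' the high-degree tail is not quite right: Poonen's high-degree step (Lemma~\ref{high}) is a decoupling via random $p$-th-power perturbations, not a surjectivity statement, and those perturbations do not in general preserve a coset $\rho_d^{-1}(u)$. The correct reason your coset-wise sieve goes through is simpler: each coset has density $q^{-h^0(S,\Oc_S)}$ in $V_d$, a \emph{fixed} positive constant, so any subset of $V_d$ with density $o(1)$ (such as $\mathcal{Q}^{\mathrm{med}}_{r,d}$ as $r\to\infty$, or $\mathcal{Q}^{\mathrm{high}}_d$) automatically has density $o(1)$ in every coset. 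With that remark your argument is complete, though the paper's route avoids the detour entirely.
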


\subsection{Strategy of the proof}
	Let $Z=\sum_i a_iZ_i$ be an $1$-cycle on $X$, where $a_i\in \Z$ and $Z_i$ are closed curves on $X$. As in the proof of Hironaka in \cite{Hi68}, we choose a very ample line bundle $\li$ on the variety $X$. For each $i$ we aim at cutting $X$ by divisors $H_{i,1},\dots, H_{i,n-1}$ containing $Z_i$ so as to get
	\[
		H_{i,1}\cap\cdots\cap H_{i,n-1}=Z_i\cup Z'_i,
	\]
	where $Z'_i$ is a smooth closed curve on $X$. Here for each $j$, we want the divisors $H_{i,j}$ to be defined by a section in $\ho^0(X,\li^{\otimes d_j})$ for some sufficiently large $d_j$. 
	
	On the other hand, we want these $d_j$'s to be chosen carefully so that for each $j$ we can also find a divisor $H'_{i,j}$ defined by a section in $\ho^0(X,\li^{\otimes d_j})$ so that the intersection
	\[
		Z''_i=H'_{i,1}\cap\cdots\cap H'_{i,n-1}
	\]
	is smooth of dimension $1$. Then it is known that $Z$ is rationally equivalent to $\sum_i a_i(Z''_i-Z'_i)$.\\
	
	The method of producing $Z''_i$ is a direct application of Poonen's Bertini smoothness theorem over finite fields, proved in \cite{Po04}. For each $i$, when $H'_{i,1},\dots H'_{i,j}$ are chosen so that $H'_{i,1}\cap\dots\cap H'_{i,j}$ is smooth of dimension $n-j$, we apply Poonen's theorem to $H'_{i,1}\cap\dots\cap H'_{i,j}$. Then for any $d_{j+1}\gg 1$, we can find $H'_{i,j+1}=\mathrm{div}\sigma'_{i,j+1}$ for some $\sigma'_{i,j+1}\in \ho^0(X,\li^{\otimes d_{j+1}})$ such that $H'_{i,1}\cap\dots\cap H'_{i,j}\cap H'_{i,j+1}$ is again smooth of dimension $n-j-1$.
	
	The first step needs some tricks. Intuitively, we want to proceed as in the second step. For each $i$, we want to find $H_{i,1},\cdots, H_{i,n-1}$ one by one so that for each $j$, the intersection $H_{i,1}\cap\dots\cap H_{i,j}$, containing $Z_i$, is smooth of dimension $n-j$. But this does not work well. In fact, it is not always true that we can find a smooth surface on $X$ containing a possibly singular curve $Z_i$. According to Gunther in \cite{Gu17} (already known in the infinite field case by Altman and Kleiman in \cite{KA79}), the dimension of a minimal smooth closed subscheme of $X$ containing $Z_i$ depends on the local embedding dimension of each singular point of $Z$. 
	
	Therefore we can not ask the total intersection $H_{i,1}\cap\dots\cap H_{i,j}$ to be smooth for each $j$. But instead, we can make it smooth outside $\mathrm{Sing}(Z)$, and contains $Z-\mathrm{Sing}(Z)$, until we get a surface $H_{i,1}\cap\dots\cap H_{i,n-2}$ containing $Z$ and smooth outside of $\mathrm{Sing}(Z)$. In particular, it is highly possible that $H_{i,1}\cap\dots\cap H_{i,n-2}$ is singular at the singular points of $Z$. 
	
	For the last divisor $H_{i,n-1}$, in order to avoid the possible problems caused by the singularity of $Z$, we simply add a supplementary condition that $H_{i,n-1}$ does not intersect with $\mathrm{Sing}(Z)$. The existence of this $H_{i,n-1}$ is deduced from our generalized version of Bertini theorem, as $\mathrm{Sing}(Z)$ is a finite subscheme of $Z$. Then the final intersection $H_{i,1}\cap\dots\cap H_{i,n-1}$ is $Z_i\cup Z'_i$, where $Z'_i$ satisfies the conditions we need. \\
	
	In Section \ref{generalber}, we show the proof of Theorem \ref{generalbertini} and Corollary \ref{generalbertinisingularity}, following faithfully the strategy of Poonen in \cite{Po04}. The proof of Theorem \ref{1cycle} using Corollary \ref{generalbertinisingularity} is completed in Section \ref{smoothing}.

\subsection{Notation}
\begin{enumerate}
	\item If $S$ is a set, we denote by $\# S$ its cardinality.
	
	\item If $X$ is a scheme over $\F_q$, we denote by $\vert X\rvert$ the underlying topological space of $X$.
	
	\item If $X$ is a scheme over $\F_q$, we denote by $\mathrm{Sing}(X)$ the singular locus of $X$ with induced reduced structure.
	
	\item Let $X$ be a projective variety over $\F_q$, and $\li$ a line bundle on $X$. For any subscheme $Y$ of $X$, we write
	\[
		\ho^0(Y,\li):=\ho^0(Y,\li|_Y).
	\]
	

\end{enumerate}

\subsection*{Acknowledgement}
	I am grateful to Olivier Wittenberg for asking the question answered in this article, and for introducing \cite{Hi68} to me. I also thank François Charles for helpful discussions.

\section{Generalized Bertini theorem over finite fields}\label{generalber}

In this section, we prove Theorem \ref{generalbertini} and Corollary \ref{generalbertinisingularity}.

\subsection{Singularities of low degree}
	
	Fix $c$ such that 
	\[
		\ho^1(X, \mi\otimes \li^{\otimes d})=0,
	\]
	and the natural map
	\[
		\ho^0(X, \mi\otimes \li^{\otimes d})\otimes \ho^0(X, \li) \longrightarrow \ho^0(X, \mi\otimes \li^{\otimes (d+1)})
	\]
	is surjective for all $d\geq c$.
	\begin{lem}\label{ressurj}
		Let $S$ be a finite subscheme of $X_0$. Let 
		\[
			\phi_{S,d} : \ho^0(X, \mi\otimes \li^{\otimes d})\longrightarrow \ho^0(S, \mi\otimes \li^{\otimes d})
		\]
		be the map induced by the restriction of sheaves. Then $\phi_{S,d}$ is surjective for $d\geq c+h^0(S, \Oc_S)$.
	\end{lem}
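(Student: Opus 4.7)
The plan is to recast surjectivity of $\phi_{S,d}$ as the vanishing of an $\ho^1$ group and then to induct on $\ell := h^0(S,\Oc_S)$. Tensoring the short exact sequence $0\to I_S\to \Oc_X\to \Oc_S\to 0$ (where $I_S$ denotes the ideal sheaf of $S$ in $X$) with the locally free sheaf $\mi\otimes\li^{\otimes d}$ and passing to cohomology, the hypothesis $\ho^1(X,\mi\otimes\li^{\otimes d})=0$ for $d\geq c$ shows that $\phi_{S,d}$ is surjective if and only if $\ho^1(X,I_S\otimes\mi\otimes\li^{\otimes d})=0$. My goal becomes establishing this vanishing for $d\geq c+\ell$; the base case $\ell=0$ (i.e.\ $S=\emptyset$, so $I_S=\Oc_X$) is contained in the hypothesis.

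For the inductive step with $\ell\geq 1$, I exploit that $\Oc_S$ is Artinian: it has a minimal nonzero ideal $J$, which is a simple $\Oc_S$-module and therefore isomorphic to $\kappa(x)$ for some closed point $x\in S$. Define $S'\subsetneq S$ by $\Oc_{S'}=\Oc_S/J$; then $h^0(S',\Oc_{S'})=\ell-k$, where $k=[\kappa(x):\F_q]$, and there is an exact sequence of ideal sheaves on $X$
\[
    0\longrightarrow I_S\longrightarrow I_{S'}\longrightarrow \kappa(x)\longrightarrow 0,
\]
with $\kappa(x)$ the skyscraper sheaf at $x$. Tensor with $\mi\otimes\li^{\otimes d}$ and take the long exact sequence; the inductive hypothesis applied to $S'$ gives $\ho^1(X,I_{S'}\otimes\mi\otimes\li^{\otimes d})=0$ for $d\geq c+\ell-k$, so the sequence identifies $\ho^1(X,I_S\otimes\mi\otimes\li^{\otimes d})$ with the cokernel of the evaluation-at-$x$ map
\[
    \ho^0(X,I_{S'}\otimes\mi\otimes\li^{\otimes d})\longrightarrow (\mi\otimes\li^{\otimes d})|_x\cong \kappa(x).
\]

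The hard part will be showing that this residual evaluation is surjective, since one must produce sections that vanish on $S'$ and simultaneously attain every prescribed value in the extension field $\kappa(x)$. The plan is a multiplication trick: choose an exponent $0\leq e\leq \ell-k$ and build such sections as products $\sigma\cdot t$, where $\sigma\in\ho^0(X,\li^{\otimes e})$ is chosen to vanish on $S'$ but be nonzero at $x$, and $t\in\ho^0(X,\mi\otimes\li^{\otimes(d-e)})$ varies so that $t\mapsto t(x)$ surjects onto $\kappa(x)$. The surjectivity of $t\mapsto t(x)$ is the lemma applied to the reduced point $\{x\}$ (of length $k$, hence requiring $d-e\geq c+k$); for $k<\ell$ this is already furnished by the induction, while the residual case $k=\ell$ (a single closed point with $\kappa(x)=\F_{q^k}$) can be handled by Galois descent from $\F_{q^k}$, over which $\{x\}$ splits into $k$ Frobenius-conjugate rational points and the lemma reduces by the same induction. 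The existence of $\sigma$ with the prescribed vanishing comes from a parallel inductive application of the lemma itself (with $\mi$ replaced by $\Oc_X$) to a suitable auxiliary $0$-dimensional subscheme of $h^0<\ell$. The bound $d\geq c+\ell=c+(\ell-k)+k$ is exactly what makes the two constraints $e\leq d-c-k$ and $e$ sufficiently large for $\sigma$ to exist compatible.
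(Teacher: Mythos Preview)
Your cohomological reformulation and induction on $\ell=h^0(S,\Oc_S)$ are natural, but the key step fails when $S$ is non-reduced at the point $x$ supporting the minimal ideal $J$. In that case $\Oc_{S',x}=\Oc_{S,x}/J\neq 0$, so $x\in S'$; the quotient map $I_{S'}\to I_{S'}/I_S\cong J$ is then \emph{not} evaluation at $x$ (indeed every section of $I_{S'}$ already vanishes at $x$), and your multiplication scheme---with $\sigma$ vanishing on $S'$ yet nonzero at $x$---is vacuous. What is actually needed is a section of $I_{S'}\otimes\mi\otimes\li^{\otimes d}$ whose class in the socle $J\subset\Oc_{S,x}$ is a prescribed nonzero element, a higher-order condition that products $\sigma\cdot t$ of the shape you propose cannot meet. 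Since the lemma is applied later in the paper precisely to the non-reduced scheme $S=x'$ (the first infinitesimal neighbourhood of a point), this case is not optional.

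A secondary issue: even when $x\notin S'$, producing $\sigma\in\ho^0(X,\li^{\otimes e})$ vanishing on $S'$ and nonzero at $x$ with $e\leq\ell-k$ is not furnished by your induction (the natural target scheme $S'\sqcup\{x\}$ has length $\ell$, not $<\ell$), and the constant $c$ was fixed for the given $\mi$, not for $\Oc_X$, so the ``parallel'' application introduces an uncontrolled new constant. The paper bypasses all of this with a short dimension count: after a harmless base-field extension one chooses $\gamma\in\ho^0(X,\li)$ with $\mathrm{div}\gamma\cap S=\emptyset$; multiplication by $\gamma$ shows that $\dim\mathrm{Im}\,\phi_{S,d}$ is nondecreasing for $d\geq c$, and the surjectivity of $\ho^0(X,\li)\otimes\ho^0(X,\mi\otimes\li^{\otimes d})\to\ho^0(X,\mi\otimes\li^{\otimes(d+1)})$ forces any stabilisation of this dimension to occur only at the full target dimension $h^0(S,\Oc_S)$. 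Hence $\dim\mathrm{Im}\,\phi_{S,d}$ strictly increases from $d=c$ until it reaches $h^0(S,\Oc_S)$, which happens no later than $d=c+h^0(S,\Oc_S)$.
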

	\begin{proof}
		As $\li$ is a very ample line bundle, $\phi_{S,d}$ is surjective for $d\gg 1$. Since the surjectivity (or non-surjectivity) of $\phi_{S,d}$ does not depend on the base field, we may enlarge $\F_q$ when necessary and assume that there exists a section $\gamma\in \ho^0(X,\li)$ satisfying $\mathrm{div}\gamma\cap S=\emptyset$. Then the map
		\[
			\ho^0(X, \mi\otimes \li^{\otimes d})\stackrel{\gamma\cdot}\longrightarrow \ho^0(X, \mi\otimes \li^{\otimes (d+1)})
		\]
		is injective. Since $\gamma$ does not vanish at any point of $S$, the map
		\[
			\ho^0(S, \mi\otimes \li^{\otimes d})\stackrel{\gamma\cdot}\longrightarrow \ho^0(S, \mi\otimes \li^{\otimes (d+1)})
		\]
		is an isomorphism for each $d\geq 1$. Hence $\gamma\cdot\mathrm{Im}\phi_{S,d}\subset \mathrm{Im}\phi_{S,d+1}$, and in particular, 
		\[
			\dim\mathrm{Im}\phi_{S,d}\leq \dim\mathrm{Im}\phi_{S,d+1}.
		\]
		If for some $d\geq c$ we have the equality of the dimensions, then 
		\[
			\gamma\cdot \mathrm{Im}\phi_{S,d} = \mathrm{Im}\phi_{S,d+1}.
		\]
		For any $d\geq c$, we have the following commutative diagram :
		\[
			\xymatrix{
				\ho^0(X,\li)\otimes\ho^0(X, \mi\otimes \li^{\otimes d}) \ar@{->>}[r] \ar[d] & \ho^0(X, \mi\otimes \li^{\otimes (d+1)})\  \ar[d] \\
				\ho^0(X,\li)\otimes\ho^0(S, \mi\otimes \li^{\otimes d}) \ar[r]& \ho^0(S, \mi\otimes \li^{\otimes (d+1)}).
			}
		\]
		In particular, as the top line is surjective, the map
		\[
			\ho^0(X,\li)\otimes \mathrm{Im}\phi_{S,d}\longrightarrow \mathrm{Im}\phi_{S,(d+1)}
		\]
		is also surjective. If $\gamma\cdot \mathrm{Im}\phi_{S,d} = \mathrm{Im}\phi_{S,d+1}$ for some $d\geq c$, then
		\begin{eqnarray*}
			\mathrm{Im}\phi_{S,d+2} &=& \ho^0(X,\li)\cdot \mathrm{Im}\phi_{S,d+1} \\
			&=& \ho^0(X,\li)\cdot \gamma\cdot \mathrm{Im}\phi_{S,d} \\
			&=& \gamma\cdot\left( \ho^0(X,\li)\cdot \mathrm{Im}\phi_{S,d} \right) \\
			&=& \gamma\cdot \mathrm{Im}\phi_{S,d+1}.
		\end{eqnarray*}
		Therefore
		\[
			\dim\mathrm{Im}\phi_{S,d+2} = \dim\mathrm{Im}\phi_{S,d+1}.
		\]
		So for any $d'>d$,
		\[
			\dim\mathrm{Im}\phi_{S,d'} = \dim\mathrm{Im}\phi_{S,d},
		\]
		and we must have
		\[
			\mathrm{Im}\phi_{S,d}=\ho^0(S,\mi\otimes \li^{\otimes d}).
		\]
		As $\dim\mathrm{Im}\phi_{S,c}\geq 0$ and $\ho^0(S,\mi\otimes \li^{\otimes d})\simeq \ho^0(S,\Oc_S)$ (non-canonically), starting from $c$, each time $d$ grows by $1$, the dimension of the image of $\phi_{S,d}$ increases by at least $1$ until we get the surjectivity of $\phi_{S,d}$. Therefore we must have that $\phi_(S,d)$ is surjective for $d\geq c+h^0(S, \Oc_S)$.
	\end{proof}
	\begin{lem}[Singularities of small degree]\label{small}
		Under the same hypotheses as in Theorem \ref{generalbertini}, define
		\[
			\mathcal{P}_{\leq r,d}=\left\{ \sigma\in \ho^0(X,\mi\otimes \li^{\otimes d})\ ;\ 
			\begin{array}{lll}
			\forall x\in \lvert \mathrm{div}\sigma\cap X_0 \rvert,\ \deg x\leq r,\\
			 \mathrm{div}\sigma \text{ is smooth of dim } n-1 \text{ at } x 
			\end{array}\right\},
		\]
		and $\mathcal{P}_{\leq r}=\bigcup_{d\geq 0}\mathcal{P}_{\leq r,d}$. Then there exists $d_1\in \Z_{>0}$, such that for any $d\geq d_1$, we have
		\begin{eqnarray*}
			\frac{\#\mathcal{P}_{\leq r,d}}{\#\ho^0(X,\mi\otimes \li^{\otimes d})}
			= \left(\prod_{x\in\lvert X_0\rvert,\ \deg x\leq r} (1-q^{-(1+n)\deg x}) \right).
		\end{eqnarray*}
		In particular, we have
		\[
			\mu(\mathcal{P}_{\leq r})=\prod_{x\in\lvert X_0\rvert,\ \deg x\leq r} (1-q^{-(1+n)\deg x}),
		\]
		and hence
		\[
			\lim_{r\rightarrow\infty}\mu(\mathcal{P}_{\leq r})=\zeta_{X_0}(1+n)^{-1}.
		\]
	\end{lem}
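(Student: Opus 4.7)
The plan is to apply Poonen's closed point sieve. Since $X_0$ has only finitely many closed points of any bounded degree, I would enumerate the finitely many closed points $x_1,\dots,x_N \in \lvert X_0\rvert$ with $\deg x_i \leq r$. For each $i$, set $S_i = \Spec(\Oc_{X,x_i}/\m_{x_i}^2)$, the first-order infinitesimal neighbourhood of $x_i$ inside $X_0$; the $S_i$ are pairwise disjoint, and their scheme-theoretic union $S$ is a finite closed subscheme of $X_0$ with $\ho^0(S,-)=\prod_i \ho^0(S_i,-)$.

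Since $X_0$ is smooth of dimension $n$ at $x_i$, the local Artinian $\kappa(x_i)$-algebra $\Oc_{X,x_i}/\m_{x_i}^2$ has length $n+1$, so $\#\ho^0(S_i,\Oc_{S_i})=q^{(n+1)\deg x_i}$. Applying Lemma \ref{ressurj} to the subscheme $S \subset X_0$, the restriction map
\[
\phi_{S,d}:\ho^0(X,\mi\otimes\li^{\otimes d})\longrightarrow \ho^0(S,\mi\otimes\li^{\otimes d})
\]
is surjective for every $d\geq d_1:=c+h^0(S,\Oc_S)$, so all its fibres have the same cardinality. Hence, for such $d$, the proportion of $\sigma\in \ho^0(X,\mi\otimes\li^{\otimes d})$ lying in $\mathcal{P}_{\leq r,d}$ equals the proportion of images in $\ho^0(S,\mi\otimes\li^{\otimes d})$ that satisfy the local smoothness/avoidance condition at each $x_i$; because $\ho^0(S,-)$ factors as a direct product, these local conditions become \emph{independent}.

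Next I would compute the local good-fraction at each $x_i$. After choosing a local trivialization of $\mi\otimes\li^{\otimes d}$ near $x_i$ and a regular system of parameters $t_1,\dots,t_n\in \m_{x_i}$, the restriction of $\sigma$ to $S_i$ becomes an element $a_0 + a_1 t_1+\cdots + a_n t_n$ of $\Oc_{X,x_i}/\m_{x_i}^2$. The divisor $\mathrm{div}\sigma$ avoids $x_i$ when $a_0\neq 0$, and is smooth of dimension $n-1$ at $x_i$ exactly when $a_0=0$ but some $a_j\neq 0$; thus the only "bad" image is $0$. The local good-fraction is therefore $1-q^{-(n+1)\deg x_i}$, and multiplying over $i$ yields the displayed formula for $\#\mathcal{P}_{\leq r,d}/\#\ho^0(X,\mi\otimes\li^{\otimes d})$ valid for all $d\geq d_1$. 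Passing to the limit in $d$ gives $\mu(\mathcal{P}_{\leq r})$ with the same product, and letting $r\to\infty$ recovers $\zeta_{X_0}(n+1)^{-1}$ straight from the definition of the zeta function.

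The only genuinely delicate step is the translation between "$\mathrm{div}\sigma$ is smooth of dimension $n-1$ at $x_i$" and the algebraic condition on $\sigma\bmod \m_{x_i}^2$; this is the standard Jacobian criterion for a hypersurface in the smooth scheme $X_0$, and the bad set does not depend on the choice of trivialization since a change of trivialization acts by multiplication by a unit. Beyond this, the argument is bookkeeping: all analytic input has been absorbed into the surjectivity statement of Lemma \ref{ressurj}, so no further obstacle arises.
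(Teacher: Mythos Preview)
Your proposal is correct and follows essentially the same approach as the paper: both reduce to the finite subscheme given by the disjoint union of first-order infinitesimal neighbourhoods of the closed points of $X_0$ of degree $\leq r$, use surjectivity of the restriction map for $d$ large, and identify the bad locus at each point with the single element $0$ to obtain the local factor $1-q^{-(n+1)\deg x}$. Your explicit invocation of Lemma~\ref{ressurj} (yielding the concrete bound $d_1=c+h^0(S,\Oc_S)$) and your spelled-out Jacobian computation are slightly more detailed than the paper's version, which simply asserts surjectivity from very ampleness, but the argument is otherwise the same.
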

	
	\begin{proof}
		For any $x\in \lvert X \rvert$, we denote by $x'$ the first order infinitesimal neighbourhood of $x$ in $X$. If $x\in \lvert X_0\rvert $, then for any line bundle $\mathcal{M}$ on $X$, we have
		\[
			\#\ho^0(x', \mathcal{M})=\#\ho^0(x', \Oc_{x'})=q^{(1+n)\deg x}.
		\]
		For such a closed point $x$, $\mathrm{div}\sigma$ is not smooth of dimension $n-1$ at $x$ if and only if the restriction of $\sigma\in \ho^0(X, \mi\otimes \li^{\otimes d})$ to $\ho^0(x', \mi\otimes \li^{\otimes d})$ is $0$.

		Now let $X'_{\leq r}$ be the finite subscheme
		\[
			X'_{\leq r}=\coprod_{x\in\lvert X_0 \rvert,\ \deg x\leq r}x'.
		\]
		Then for any $d\geq 0$,
		\begin{eqnarray*}
			\ho^0(X'_{\leq r}, \mi\otimes \li^{\otimes d})
			\simeq \prod_{x\in\lvert X_0 \rvert,\ \deg x\leq r}\ho^0(x', \mi\otimes \li^{\otimes d}) ,
		\end{eqnarray*}
		and $\sigma\in \mathcal{P}_{\leq r,d}$ if and only if when we restrict $\sigma$ to $\ho^0(X'_{\leq r}, \mi\otimes \li^{\otimes d})$, the image is non-zero in each factor of the product. 
		
		As $\li$ is very ample, we can find $d_1>0$ such that the restriction
		\[
			\ho^0(X, \mi\otimes \li^{\otimes d})\longrightarrow \ho^0(X'_{\leq r}, \mi\otimes \li^{\otimes d})
		\]
		is surjective for any $d\geq d_1$. For such $d$, we have
		\begin{eqnarray*}
			\frac{\#\mathcal{P}_{\leq r,d}}{\#\ho^0(X,\mi\otimes \li^{\otimes d})} = \prod_{x\in\lvert X_0\rvert,\ \deg x\leq r} \frac{q^{(1+n)\deg x}-1}{q^{(1+n)\deg x}}.
		\end{eqnarray*}
		This turns out to be the result of the lemma.
	\end{proof}
	
	To prove the variant of the theorem, we need another lemma.

	\begin{lem}
		Under the same hypotheses as in Theorem \ref{generalbertini} and assuming that $S$ is a finite subscheme of $Y$ as in Corollary \ref{generalbertinisingularity}, define
		\[
			\mathcal{P}_{S,\leq r,d}=\left\{ \sigma\in \ho^0(X,\mi\otimes \li^{\otimes d})\ ;\ 
			\begin{array}{lll}
			\forall x\in \lvert \mathrm{div}\sigma\cap X_0 \rvert,\ \deg x\leq r,\\
			 \mathrm{div}\sigma \text{ is smooth of dim } n-1 \text{ at } x, \\
			 \mathrm{div}\sigma\cap S=\emptyset
			\end{array}\right\},
		\]
		and $\mathcal{P}_{S,\leq r}=\bigcup_{d\geq 0}\mathcal{P}_{S,\leq r, d}$. Then we have
		\[
			\mu(\mathcal{P}_{S,\leq r})=\prod_{x\in\lvert X_0\rvert,\ \deg x\leq r} (1-q^{-(1+n)\deg x})\cdot\prod_{x\in\lvert S\rvert} \left(1-q^{-\deg x}\right),
		\]
		and hence
		\[
			\lim_{r\rightarrow\infty}\mu(\mathcal{P}_{S,\leq r})=\zeta_{X_0}(1+n)^{-1}\prod_{x\in\lvert S\rvert} \left(1-q^{-\deg x}\right).
		\]
	\end{lem}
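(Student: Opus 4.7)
The plan is to mimic the proof of Lemma \ref{small}, enlarging the finite test subscheme so as to also encode the vanishing condition along $S$. Set
\[
	W_{\leq r}=X'_{\leq r}\sqcup S_{\mathrm{red}},
\]
where $X'_{\leq r}=\coprod_{x\in\lvert X_0\rvert,\ \deg x\leq r}x'$ is the finite subscheme already used in Lemma \ref{small} and $S_{\mathrm{red}}$ is the underlying reduced subscheme of $S$. Because $X'_{\leq r}$ is supported in $X_0$ while $S$ is supported in $Y$, these have disjoint support and their disjoint union is realised as a closed subscheme of $X$.

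Next I would translate the defining conditions of $\mathcal{P}_{S,\leq r,d}$ into pointwise non-vanishing conditions for the restriction of $\sigma\in\ho^0(X,\mi\otimes\li^{\otimes d})$ to $W_{\leq r}$. As in Lemma \ref{small}, for $x\in\lvert X_0\rvert$ of degree $\leq r$, the conditions ``$x\notin\mathrm{div}\sigma$, or $\mathrm{div}\sigma$ is smooth of dimension $n-1$ at $x$'' together are equivalent to $\sigma|_{x'}\neq 0$ in the $\kappa(x)$-vector space $\ho^0(x',\mi\otimes\li^{\otimes d})$ of cardinality $q^{(1+n)\deg x}$. For $x\in\lvert S\rvert$, the condition $x\notin\mathrm{div}\sigma$ amounts to the image of $\sigma$ in the one-dimensional fibre of $\mi\otimes\li^{\otimes d}$ at $x$ over $\kappa(x)$ being non-zero; in particular it depends only on $\sigma|_{S_{\mathrm{red}}}$, so testing against $S_{\mathrm{red}}$ inside $W_{\leq r}$ suffices.

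Then I would show that for $d$ sufficiently large (depending on $r$ and $S$) the restriction map
\[
	\ho^0(X,\mi\otimes\li^{\otimes d})\longrightarrow \ho^0(W_{\leq r},\mi\otimes\li^{\otimes d})
\]
is surjective. This follows by repeating the argument of Lemma \ref{ressurj}: after enlarging $\F_q$ if necessary one can find $\gamma\in\ho^0(X,\li)$ with $\mathrm{div}\gamma\cap W_{\leq r}=\emptyset$, since $\li$ is very ample and $W_{\leq r}$ is finite, and the remainder of that proof carries over without change (it never uses that the test subscheme lies in $X_0$).

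Given this surjectivity, the non-vanishing conditions at the distinct components of $W_{\leq r}$ become independent events, and counting yields
\[
	\frac{\#\mathcal{P}_{S,\leq r,d}}{\#\ho^0(X,\mi\otimes\li^{\otimes d})}=\prod_{x\in\lvert X_0\rvert,\ \deg x\leq r}\bigl(1-q^{-(1+n)\deg x}\bigr)\cdot\prod_{x\in\lvert S\rvert}\bigl(1-q^{-\deg x}\bigr)
\]
for all sufficiently large $d$. Letting $d\to\infty$ proves the first formula, and invoking the definition of $\zeta_{X_0}$ as $r\to\infty$ yields the second. The argument is essentially a direct combination of Lemmas \ref{ressurj} and \ref{small}, and I expect no substantial obstacle beyond verifying the disjointness of the two sets of conditions outlined above.
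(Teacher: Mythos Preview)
Your proposal is correct and follows essentially the same approach as the paper: the paper also enlarges the test scheme to $X'_{\leq r}\cup\bigl(\coprod_{x\in\lvert S\rvert}x\bigr)$ (which is precisely your $W_{\leq r}$), observes that the restriction map is surjective for large $d$ by very-ampleness of $\li$, and then counts the non-vanishing conditions factor by factor. Your explicit remark that the supports are disjoint because $S\subset Y$ and $X'_{\leq r}\subset X_0$, and your justification of surjectivity via the argument of Lemma~\ref{ressurj}, are a bit more detailed than the paper's treatment but amount to the same thing.
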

	\begin{proof}
		We proceed in a similarly way to the proof of Lemma \ref{small}, but consider the finite scheme $X'_{\leq r}\cup \left(\coprod_{x\in \lvert S\rvert }x\right)$ instead of $X'_{\leq r}$. Then we have
		\begin{eqnarray*}
			\ho^0(X'_{\leq r}\cup \coprod_{x\in \lvert S\rvert }x, \mi\otimes \li^{\otimes d})
			\simeq \left(\prod_{x\in\lvert X_0 \rvert,\ \deg x\leq r}\ho^0(x', \mi\otimes \li^{\otimes d})\right)\times \left(\prod_{x\in\lvert S\rvert}\ho^0(x, \mi\otimes \li^{\otimes d})\right),
		\end{eqnarray*}
		and a section $\sigma$ is contained in $\mathcal{P}_{S,\leq r,d}$ if and only if when we restrict $\sigma$ to $\ho^0(X'_{\leq r}\cup \coprod_{x\in \lvert S\rvert }x, \mi\otimes \li^{\otimes d})$, the image is non-zero in each factor on the right side. In fact, $\mathrm{div}\sigma\cap S=\emptyset$ if and only if $\sigma(x)\not=0$ in $\ho^0(x, \mi\otimes \li^{\otimes d})$ for each $x\in \lvert S \rvert$.
		
		As $\li$ is very ample, we can find $d'_1>0$ such that the restriction
		\[
			\ho^0(X, \mi\otimes \li^{\otimes d})\longrightarrow \ho^0(X'_{\leq r}\cup \coprod_{x\in \lvert S\rvert }x, \mi\otimes \li^{\otimes d})
		\]
		is surjective for any $d\geq d'_1$. For such $d$, we have
		\begin{eqnarray*}
			\frac{\#\mathcal{P}_{S,\leq r,d}}{\#\ho^0(X,\mi\otimes \li^{\otimes d})} = \prod_{x\in\lvert X_0\rvert,\ \deg x\leq r} \frac{q^{(1+n)\deg x}-1}{q^{(1+n)\deg x}}
			\cdot \prod_{x\in \lvert S\rvert}\frac{q^{\deg x}-1}{q^{\deg x}}.
		\end{eqnarray*}
		This proportion gives us the result for $\mu(\mathcal{P}_{S,\leq r})$ and $\lim_{r\rightarrow\infty}\mu(\mathcal{P}_{S,\leq r})$.
	\end{proof}
	
\subsection{Singularities of medium degree}
	
	\begin{lem}\label{closurj}
		Under the same hypotheses as in Theorem \ref{generalbertini}, let $x$ be a closed point in $X_0$ of degree $e$, where $e\leq \frac{d-c}{n+1}$. Then the proportion of $\sigma \in \ho^0(X,\mi\otimes \li^{\otimes d})$ such that $\mathrm{div}\sigma$ is singular at $x$ equals $q^{-(1+n)e}$.
	\end{lem}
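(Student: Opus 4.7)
The plan is to translate the singularity condition at $x$ into a vanishing condition on a finite scheme, and then apply the surjectivity from Lemma \ref{ressurj} to compute the proportion exactly.

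First I would recall the observation already used in the proof of Lemma \ref{small}: since $x$ is a smooth closed point of $X_0$, its first-order infinitesimal neighbourhood $x' = \Spec \Oc_{X,x}/\m_x^2$ has $\dim_{\kappa(x)} \Oc_{x'} = 1+n$, so $h^0(x',\Oc_{x'}) = (1+n)e$. The local condition that $\mathrm{div}\sigma$ fails to be smooth of dimension $n-1$ at $x$ (equivalently, is singular at $x$, or does not pass through $x$ transversely) is exactly that the image of $\sigma$ under the restriction
\[
  \phi_{x',d}\colon \ho^0(X,\mi\otimes\li^{\otimes d})\longrightarrow \ho^0(x',\mi\otimes\li^{\otimes d})
\]
is zero.

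Next I would apply Lemma \ref{ressurj} with $S=x'$. The hypothesis $e\leq \frac{d-c}{n+1}$ rearranges to $d \geq c + (1+n)e = c + h^0(x',\Oc_{x'})$, which is exactly the numerical condition needed to conclude that $\phi_{x',d}$ is surjective.

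Once surjectivity is in hand, the computation is immediate: the fibres of $\phi_{x',d}$ all have the same cardinality $\#\ker\phi_{x',d}$, so the proportion of $\sigma$ mapping to $0$ equals $1/\#\ho^0(x',\mi\otimes\li^{\otimes d})$. Since $\mi\otimes\li^{\otimes d}$ is a line bundle on the artinian scheme $x'$, a trivialisation (non-canonical) gives $\ho^0(x',\mi\otimes\li^{\otimes d})\simeq \Oc_{x'}$ as $\F_q$-modules, so the target has cardinality $q^{(1+n)e}$. Combining, the desired proportion equals $q^{-(1+n)e}$. There is no real obstacle here; the only subtle point is invoking the numerical bound in Lemma \ref{ressurj} correctly, which is precisely what the hypothesis $e\leq (d-c)/(n+1)$ is designed to ensure.
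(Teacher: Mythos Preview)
Your proposal is correct and follows exactly the same approach as the paper: apply Lemma \ref{ressurj} to $S=x'$. The paper's proof is a single sentence to that effect; you have merely (and correctly) spelled out the details---computing $h^0(x',\Oc_{x'})=(1+n)e$, matching the hypothesis $e\leq (d-c)/(n+1)$ to the bound $d\geq c+h^0(x',\Oc_{x'})$ required by Lemma \ref{ressurj}, and deducing the proportion from surjectivity.
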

	\begin{proof}
		The lemma follows by applying Lemma \ref{ressurj} to $S=x'$, where $x'$ is the first order infinitesimal neighbourhood of $x$ in $X$.
	\end{proof}
	
	\begin{lem}[Singularities of medium degree]\label{med}
		Under the same hypotheses as in Theorem \ref{generalbertini}, define
		\[
			\mathcal{Q}_{r,d}^{\mathrm{med}}:=\left\{ \sigma\in \ho^0(X,\mi\otimes \li^{\otimes d})\ ;\ 
			\begin{array}{ll}
				\exists x\in X_0,\ r<\deg x\leq\frac{d-c}{n+1}, \\
				\mathrm{div}\sigma \text{ is singular at }x
			\end{array}
			\right\}.
		\]
		Set $\mathcal{Q}_{r}^{\mathrm{med}}=\bigcup_{d\geq 0}\mathcal{Q}_{r,d}^{\mathrm{med}}$. Then 
		\[
			\lim_{r\rightarrow \infty}\overline{\mu}(\mathcal{Q}_{r}^{\mathrm{med}})=0.
		\]
	\end{lem}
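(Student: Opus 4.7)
The plan is to bound $\mathcal{Q}_{r,d}^{\mathrm{med}}$ by a union bound over closed points $x \in X_0$ of intermediate degree, then sum the resulting geometric series and let $r \to \infty$.

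First, for each fixed closed point $x \in X_0$ with $r < \deg x \leq (d-c)/(n+1)$, Lemma \ref{closurj} tells us that the proportion of sections $\sigma \in \ho^0(X, \mi \otimes \li^{\otimes d})$ whose divisor is singular at $x$ is exactly $q^{-(1+n)\deg x}$. Taking a union over all such $x$ yields
\[
\frac{\#\mathcal{Q}_{r,d}^{\mathrm{med}}}{\#\ho^0(X, \mi\otimes \li^{\otimes d})} \leq \sum_{e=r+1}^{\lfloor (d-c)/(n+1)\rfloor} N_e \cdot q^{-(1+n)e},
\]
where $N_e$ denotes the number of closed points of $X_0$ of degree $e$.

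Next I would control $N_e$ using the elementary Lang--Weil type estimate for the quasi-projective variety $X_0$ of dimension $n$: there is a constant $C>0$, independent of $e$, such that $\#X_0(\F_{q^e}) \leq C q^{en}$. Since each closed point of degree $e$ contributes $e$ points to $X_0(\F_{q^e})$, we have $N_e \leq C q^{en}/e \leq C q^{en}$. Substituting this bound, the sum collapses to a geometric tail
\[
\frac{\#\mathcal{Q}_{r,d}^{\mathrm{med}}}{\#\ho^0(X, \mi\otimes \li^{\otimes d})} \leq C \sum_{e=r+1}^{\infty} q^{-e} = \frac{C\, q^{-r}}{q-1},
\]
and crucially this bound is uniform in $d$.

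Taking $\limsup_{d\to\infty}$ gives $\overline{\mu}(\mathcal{Q}_r^{\mathrm{med}}) \leq C q^{-r}/(q-1)$, and then letting $r\to\infty$ yields the conclusion. The computation is essentially routine once one has Lemma \ref{closurj}; the only subtle point, and the step to handle with care, is ensuring that the upper bound on $N_e$ is uniform in $e$ and applies to the quasi-projective open $X_0$ rather than to $X$ itself. Embedding $X_0$ in its projective closure $X$ and invoking $\#X_0(\F_{q^e}) \leq \#X(\F_{q^e}) = O(q^{en})$ via the standard estimate on $\F_{q^e}$-points of projective varieties of dimension $n$ suffices.
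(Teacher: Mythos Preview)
Your proof is correct and follows essentially the same approach as the paper: a union bound over closed points of intermediate degree, the pointwise probability from Lemma~\ref{closurj}, the Lang--Weil bound $\#X(\F_{q^e})=O(q^{ne})$ to control the number of such points, and the resulting geometric tail in $r$. If anything, your version is slightly more explicit in distinguishing the count $N_e$ of degree-$e$ closed points from $\#X_0(\F_{q^e})$ and in passing from $X_0$ to $X$ for the point count.
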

	\begin{proof}
		By the Lang-Weil estimates in \cite{LW54}, there exists a constant $c_0>0$ such that for any $e\in\Z_{>0}$, 
		\[
			\# X(\F_{q^e})\leq c_0q^{ne}.
		\]
		Then by Lemma \ref{closurj}, we have
		\begin{eqnarray*}
			\frac{\#\mathcal{Q}_{r,d}^{\mathrm{med}}}{\#\ho^0(X,\mi\otimes \li^{\otimes d})} 
			&\leq&\sum_{x\in \lvert X_0\rvert,\ r< \deg x\leq\frac{d-c}{1+n}}q^{-(1+n)e} \\
			&\leq& \sum_{e=r+1}^{+\infty}c_0q^{ne}q^{-(1+n)e}\\
			&=& 2c_0\sum_{e=r+1}^{+\infty}q^{-e}\leq 4c_0q^{-r}.
		\end{eqnarray*}
		When $r$ tends to infinity, $4c_0q^{-r}\rightarrow 0$ and hence $\lim_{r\rightarrow \infty}\overline{\mu}(\mathcal{Q}_{r}^{\mathrm{med}})=0$.
	\end{proof}
	
\subsection{Singularities of high degree}

	\begin{lem}[Singularities of high degree]\label{high}
		Under the same hypotheses as in Theorem \ref{generalbertini}, define
		\[
			\mathcal{Q}_{d}^{\mathrm{high}}:=\left\{ \sigma\in \ho^0(X,\mi\otimes \li^{\otimes d})\ ;\ 
			\begin{array}{ll}
				\exists x\in X_0,\ \deg x>\frac{d-c}{1+n} \text{ s.t. }  \\
				\mathrm{div}\sigma \text{ is singular at }x
			\end{array}
			\right\}.
		\]
		Set $\mathcal{Q}^{\mathrm{high}}=\bigcup_{d\geq 0}\mathcal{Q}_{d}^{\mathrm{high}}$. Then $\overline{\mu}(\mathcal{Q}^{\mathrm{high}})=0$.
	\end{lem}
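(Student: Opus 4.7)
The plan is to adapt Poonen's argument from \cite[Lemma 2.6]{Po04} essentially verbatim; the twist by $\mi$ introduces no new difficulty, since $\mi$ is trivial on any sufficiently small affine open of $X_0$. By countable subadditivity of $\overline{\mu}$, it suffices to fix a finite affine open cover $U_1,\dots, U_N$ of $X_0$ on each of which there exist: local parameters $t_1,\dots,t_n \in \Gamma(U_k,\Oc_X)$ whose differentials trivialize $\Omega^1_{U_k/\F_q}$; a section $s\in\ho^0(X,\li)$ whose restriction trivializes $\li|_{U_k}$; and a section $\mu_k \in \Gamma(U_k,\mi)$ trivializing $\mi|_{U_k}$, and to bound for each $k$ the upper density of the subset of $\mathcal{Q}^{\mathrm{high}}_d$ whose singular point lies in $U_k$. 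Writing $f:=\sigma/(\mu_k s^d) \in \Gamma(U_k,\Oc_X)$, the singularity condition at a closed point $x \in U_k$ reads $f(x)=0$ and $(\partial f/\partial t_j)(x)=0$ for $j=1,\dots,n$.

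The key step is a decomposition of a uniform random $\sigma \in \ho^0(X,\mi\otimes\li^{\otimes d})$ into $n+1$ independent pieces drawn from spaces of sections of smaller twist. Concretely, using the surjectivity built into the choice of $c$ together with a $p$-th power trick on the local coordinates $t_j$, one writes uniform $\sigma$ as $\sigma_0 + \sum_{j=1}^n \eta_j$, where $\sigma_0$ and the $\eta_j$ are independent uniform elements of appropriate subspaces of sections of $\mi\otimes\li^{\otimes d'}$ for $d'$ of order $d/(1+n)$, chosen so that on $U_k$ the value $f(x)$ depends only on $\sigma_0$ and each $(\partial f/\partial t_j)(x)$ depends only on $\eta_j$ modulo the other pieces. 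Since $\mu_k$ is a unit on $U_k$, multiplication by $\mu_k^{-1}$ converts this local analysis into Poonen's original one. For a closed point $x \in U_k$ of degree $e$, the probability that all $n+1$ independent linear conditions are satisfied is then at most $q^{-(1+n)e}$.

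Finally, summing over closed points $x \in U_k$ with $e = \deg x > (d-c)/(1+n)$ and using the Lang--Weil bound $\#X_0(\F_{q^e}) \leq C q^{ne}$, the contribution is bounded by
\[
\sum_{e > (d-c)/(1+n)} C\, q^{ne} \cdot q^{-(1+n)e} \;\leq\; 2C\, q^{-(d-c)/(1+n)},
\]
which tends to $0$ as $d\to\infty$. Combining the bounds over the finite cover yields $\overline{\mu}(\mathcal{Q}^{\mathrm{high}})=0$. The main obstacle is producing Poonen's decomposition in the twisted setting with the correct independence of $f$ and its partial derivatives; this is handled by trivializing $\mi$ via $\mu_k$ on each $U_k$, after which Poonen's original counting applies without modification, the only arithmetic change being the trivial one that the local representative is divided by $\mu_k s^d$ instead of $s^d$.
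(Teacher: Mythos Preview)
Your reduction to a finite affine cover and your handling of the twist by $\mi$ via a local trivialization $\mu_k$ are both fine and match the paper's approach. The gap is in the third and fourth paragraphs: you are essentially reproducing the \emph{medium}-degree argument (the paper's Lemma~\ref{closurj}/\ref{med}) and applying it to high-degree points, where it no longer holds.

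Concretely, your claim that ``the probability that all $n{+}1$ independent linear conditions are satisfied is then at most $q^{-(1+n)e}$'' requires, for each of your independent pieces, that its restriction to $\kappa(x)$ be surjective; otherwise the probability of a single linear condition is $1/\#(\text{image})$, which is \emph{larger} than $q^{-e}$, not smaller. Surjectivity onto the first infinitesimal neighbourhood is exactly what is guaranteed only for $e\le (d-c)/(n{+}1)$; the high-degree regime is by definition the range where it fails. Whether your pieces live in degree $\sim d/(n{+}1)$ (as you write) or in degree $\sim d/p$ (as in Poonen's $p$-th power decomposition), there will always be closed points of degree exceeding that bound, and for those your pointwise estimate breaks down. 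The subsequent Lang--Weil summation is therefore unjustified.

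Poonen's actual high-degree argument, which the paper follows, is \emph{not} a union bound over closed points. One writes $\sigma=\sigma_0+\sum_i\tau^{d_4}t_i\psi_U^{-1}(\beta_i^p)+\tau^{d_4}\psi_U^{-1}(\gamma^p)$, defines the loci $W_j=U\cap\{g_1=\cdots=g_j=0\}$ built from the $D_j$'s, and argues globally: by refined B\'ezout, each $W_{j-1}$ has $O(d^{j-1})$ top-dimensional components, and for each such component the set of $\beta_j$ making $g_j$ vanish identically on it is a coset of codimension growing linearly in $d$ (Lemma~\ref{need1}). This drives $\dim W_n\le 0$ with probability $1-o(1)$. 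Only then does one use a pointwise estimate, but over the \emph{finite} set $W_n$ of size $O(d^n)$, bounding the probability that the last piece $\gamma$ makes $\sigma$ vanish at any high-degree point of $W_n$ (Lemma~\ref{need2}). The key idea you are missing is this passage from pointwise control to control of the dimension of a global intersection.
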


	For an open subset $U$ of $X_0$, we define
	\[
		\mathcal{Q}_{U, d}^{\mathrm{high}}:=\left\{ \sigma\in \ho^0(X,\mi\otimes \li^{\otimes d})\ ;\ 
		\begin{array}{ll}
			\exists x\in U,\ \deg x>\frac{d-c}{1+n} \text{ s.t. }  \\
			\mathrm{div}\sigma \text{ is singular at }x
		\end{array}
		\right\}
	\]
	and $\mathcal{Q}_{U}^{\mathrm{high}}=\bigcup_{d\geq 0}\mathcal{Q}_{U, d}^{\mathrm{high}}$. Then for each finite open cover $\{U_i\}_{i\in I}$ of $X_0$, the lemma follows if $\overline{\mu}(\mathcal{Q}_{U_i}^{\mathrm{high}})=0$ for each $U_i$ of the open cover. 
	
	Since $X_0$ is smooth of dimension $n$, the sheaf $\Omega^1_{X_0/\F_q}$ is locally free of rank $n$. We may cover $X_0$ by finitely many open subset $U$ such that we can find a trivalization $\psi_{U}: \mi|_U\longrightarrow \Oc_U$ and that there exist local sections $t_1,\dots, t_n\in \ho^0(U,\Oc_{X})$ with
	\[
		\Omega_{U/\F_q}^1\simeq \bigoplus_{i=1}^n\Oc_U \mathrm{d} t_i.
	\]
	Note that the trivialization $\psi_{U}$ induces isomorphisms $(\mi\otimes \li^{\otimes d})|_U\longrightarrow (\li^{\otimes d})|_U$ for each $d\geq 0$, which we will still denote by $\psi_U$. Moreover, choosing a finer finite open cover of $X_0$ when needed, we may assume that for some $N_1>0$ not divisible by $p$, there exists a section $\tau\in \ho^0(X, \li^{\otimes N_1})$ such that  $U=X-\mathrm{div}\tau$. 
	
	So to prove Lemma \ref{high}, it is enough to show that $\overline{\mu}(\mathcal{Q}_U^{\mathrm{high}})=0$ for such open subsets $U$ of $X_0$.\\

	For each $i=1,\dots, n$, let $\partial_i$ be the dual of $\mathrm{d} t_i$ in 
	\[
		Der_{\F_q}(\mathcal{O}_U, \mathcal{O}_U)\simeq \Hom_{\mathcal{O}_U}(\Omega^1_{U/\F_q}, \mathcal{O}_U)\simeq \ho^0\left( U, \mathcal{H}om(\Omega^1_{X/\F_q}, \mathcal{O}_X) \right).
	\]
	When we have a section $\sigma\in \ho(X, \mi\otimes \li^{\otimes d})$, whether $\mathrm{div}\sigma$ is singular at a closed point $x$ of $U$ can be tested via $\psi_U(\sigma|_U)$. In fact, as 
	\[
		\mathrm{div}\sigma\cap U=\mathrm{div}\psi_U(\sigma|_U),
	\]
	$\mathrm{div}\sigma$ is singular at $x$ if and only if 
	\[
		\psi_U(\sigma|_U)(x)=\partial_1\left(\psi_U(\sigma|_U)\right)(x)=\cdots=\partial_n\left(\psi_U(\sigma|_U)\right)(x)=0,
	\]
	i.e.
	\[
		\mathrm{Sing}(\mathrm{div}\sigma)\cap U=\mathrm{div}\psi_U(\sigma|_U)\cap\bigcap_{i=1}^n \mathrm{div}\ \partial_i\left(\psi_U(\sigma|_U)\right).
	\]
	But it is not easy to bound the intersection on the right side directly. To fix this problem, we need to extend the local sections $\partial_i\left(\psi_U(\sigma|_U)\right)$ to sections on $X$. While this may not be possible directly, the following two lemmas tells us that we can always extend these sections by multiplying a certain power of the section $\tau$.
	\begin{lem}
		There exists an integer $d_2>0$ such that for any $d\geq 0$, $\tau^{d_2}\psi_U$ determines a map on $X$
		\[
			\ho^0(X, \mi\otimes \li^{\otimes d})\longrightarrow \ho^0(X, \li^{\otimes (d+N_1d_2)})
		\]
		sending $\sigma$ to $\tau^{d_2}\psi_U(\sigma|_U)$, where the isomorphism $\psi_U :\mi|_U\longrightarrow \Oc_U$ and the section $\tau\in \ho^0(X, \li^{\otimes N_1})$ is defined as above.
	\end{lem}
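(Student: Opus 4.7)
The plan is to reinterpret the trivialization $\psi_U$ as a section of the dual line bundle $\mi^{\vee}$ on $U$, and to extend that section globally over $X$ after multiplying by a suitable power of $\tau$. The crucial feature is that this power $d_2$ will depend only on $\psi_U$ and $\tau$, and \emph{not} on $d$ or on the individual section $\sigma$, which is exactly what the lemma asserts.

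First I would observe that giving the isomorphism $\psi_U \colon \mi|_U \xrightarrow{\sim} \Oc_U$ is equivalent to giving a nowhere-vanishing section $\tilde\psi_U \in \ho^0(U, \mi^{\vee}|_U)$. Since $X$ is a projective variety (hence Noetherian and separated), $\tau \in \ho^0(X, \li^{\otimes N_1})$ is a global section of an invertible sheaf whose non-vanishing locus is exactly $U$, and $\mi^{\vee}$ is coherent, I would invoke the standard identity
\[
\varinjlim_{k \geq 0} \ho^0\bigl(X, \mi^{\vee} \otimes \li^{\otimes N_1 k}\bigr) \xrightarrow{\;\sim\;} \ho^0(U, \mi^{\vee}|_U),
\]
where the transition maps are multiplication by $\tau$. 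Applied to $\tilde\psi_U$, this produces some integer $d_2 > 0$ together with a global section $\tilde\psi \in \ho^0(X, \mi^{\vee} \otimes \li^{\otimes N_1 d_2})$ whose restriction to $U$ equals $\tau^{d_2}|_U \otimes \tilde\psi_U$.

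Next I would reinterpret $\tilde\psi$ as a morphism of $\Oc_X$-modules $\mi \to \li^{\otimes N_1 d_2}$, using the identification $\mathcal{H}om(\mi, \li^{\otimes N_1 d_2}) \simeq \mi^{\vee} \otimes \li^{\otimes N_1 d_2}$, which is valid because $\mi$ is invertible. Tensoring with $\li^{\otimes d}$ yields, uniformly in $d \geq 0$, a morphism $\mi \otimes \li^{\otimes d} \to \li^{\otimes (d+N_1 d_2)}$, and taking global sections gives the desired map
\[
\ho^0(X, \mi \otimes \li^{\otimes d}) \longrightarrow \ho^0(X, \li^{\otimes (d+N_1 d_2)}).
\]
To identify this map with $\sigma \mapsto \tau^{d_2}\psi_U(\sigma|_U)$, I would simply restrict the resulting global section to $U$, where it reads $\tilde\psi|_U \cdot \sigma|_U = \tau^{d_2}|_U \cdot \psi_U(\sigma|_U)$; the extension to $X$ is then determined uniquely, because line bundles on the reduced scheme $X$ are torsion-free and $U$ is dense.

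The main subtlety is precisely the uniformity of $d_2$ in $d$. This is why I would extend the morphism $\psi_U$ of coherent sheaves \emph{on $U$} once and for all, rather than extending $\psi_U(\sigma|_U)$ separately for each individual $\sigma$: once the global morphism $\mi \to \li^{\otimes N_1 d_2}$ is constructed, the independence of the bound $d_2$ from $d$ comes for free by tensoring with $\li^{\otimes d}$.
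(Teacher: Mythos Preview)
Your proof is correct and follows essentially the same approach as the paper: view $\psi_U$ as a section of $\mathcal{H}om(\mi,\Oc_X)\simeq \mi^{\vee}$ over $U$, extend $\tau^{d_2}\psi_U$ to a global morphism $\mi\to\li^{\otimes N_1 d_2}$ for some $d_2$, and then tensor with $\li^{\otimes d}$ uniformly in $d$. You are simply more explicit than the paper about the extension step (invoking the direct-limit identification $\varinjlim_k \ho^0(X,\mi^{\vee}\otimes\li^{\otimes N_1 k})\simeq \ho^0(U,\mi^{\vee})$) and about why the resulting global section is determined by its restriction to $U$.
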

	\begin{proof}
		The trivialization $\psi_U$ can be regarded as a section in $\ho^0(U, \mathcal{H}om(\mi, \Oc_X))$. As $\li$ is ample, we can find an integer $d_2>0$ such that $\psi_U\cdot \tau^{d_2}$ extends to a global section in 
		\[
			\ho^0(X, \mathcal{H}om(\mi, \Oc_X)\otimes \li^{N_1d_2})\simeq \mathrm{Hom}(\mi, \li^{N_1d_2}).
		\]
		We fix an extension implicitly. Then for any $d\geq 0$, $\psi_U\cdot \tau^{d_2}$ induces a morphism 
		\[
			\mi\otimes \li^{\otimes d}\longrightarrow \li^{\otimes (d+N_1d_2)},
		\]
		and hence a morphism of global sections 
		\[
			\ho^0(X, \mi\otimes \li^{\otimes d})\longrightarrow \ho^0(X, \li^{\otimes (d+N_1d_2)})
		\]
		sending $\sigma$ to $\tau^{d_2}\psi_U(\sigma|_U)$. 
	\end{proof}
		
	Similarly, we have :
	\begin{lem}
		There exists an integer $d_3>0$ such that for each $1\leq i\leq n$ and any $d\geq 0$, $\tau^{d_3}\cdot \partial_i$ determines a map on $X$
		\[
			\ho^0(X,\li^{\otimes d})\longrightarrow \ho^0(X,\li^{\otimes (d+N_1d_3)})
		\]
		sending $\sigma$ to $\tau^{d_3}\partial_i(\sigma|_U)$.
	\end{lem}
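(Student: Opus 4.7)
The plan is to imitate the proof of the preceding lemma almost verbatim, with the derivation $\partial_i$ taking the place of the trivialization $\psi_U$. First I would reinterpret $\partial_i$ as a local section of the tangent sheaf $\mathcal{T}_{X/\F_q} := \mathcal{H}om(\Omega^1_{X/\F_q}, \Oc_X)$ on $U$; this makes sense because on $U$ we have the free presentation $\Omega^1_{U/\F_q} \simeq \bigoplus_{i=1}^n \Oc_U \mathrm{d}t_i$, so $\partial_i$ is simply the corresponding dual basis element. The tangent sheaf is coherent on $X$, so the same ampleness-based extension argument as in the preceding lemma applies: there exists $d_3 > 0$, chosen uniformly for $i = 1, \ldots, n$, such that each local section $\tau^{d_3}\partial_i$ extends to a global section of $\mathcal{T}_{X/\F_q} \otimes \li^{\otimes N_1 d_3}$ on $X$.

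Next I would explain how this global extension produces the required map on sections of line bundles. Since $\partial_i$ is a derivation and not $\Oc_X$-linear, the extension does not directly give a morphism of $\Oc_X$-modules, which is where the present lemma requires a small extra input beyond the previous one. The resolution is to use the trivialization of $\li^{\otimes N_1}|_U$ provided by the nowhere-vanishing section $\tau|_U$: locally on $U$, a section $\sigma \in \ho^0(X, \li^{\otimes d})$ corresponds, after dividing by an appropriate $\tau$-power, to a regular function on which $\partial_i$ acts as a derivation in the usual sense, and the factor $\tau^{d_3}$ then exactly compensates for the negative $\tau$-powers introduced. Combined with the global extension of $\tau^{d_3}\partial_i$ coming from the coherent sheaf step, this produces, for every $d \geq 0$, an $\F_q$-linear map $\ho^0(X, \li^{\otimes d}) \to \ho^0(X, \li^{\otimes (d + N_1 d_3)})$ whose formation on $U$ is $\sigma \mapsto \tau^{d_3}\partial_i(\sigma|_U)$.

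The main obstacle I anticipate is the correct interpretation of $\partial_i$ as an operator on sections of $\li^{\otimes d}|_U$ rather than just on functions in $\Oc_U$. Once this convention, implicitly furnished by $\tau$, is fixed, the remainder of the argument is a direct adaptation of the preceding lemma: coherence of $\mathcal{T}_{X/\F_q}$ together with ampleness of $\li$ supplies the needed global extension after twisting by a high enough power of $\li$, and we take $d_3$ to be the maximum of the bounds required for the $n$ derivations $\partial_1, \ldots, \partial_n$ so that a single integer works simultaneously for all of them.
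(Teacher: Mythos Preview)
Your proposal follows the same route as the paper, whose proof is even terser: it simply records that for large enough $d_3$ each $\tau^{d_3}\partial_i$ extends to a global section of $\mathcal{H}om(\Omega^1_{X/\F_q},\Oc_X)\otimes\li^{N_1 d_3}$ and then asserts that ``the extended element induces the map in the statement,'' with no further comment. Your flagging of the non-$\Oc_X$-linearity of $\partial_i$ is therefore more careful than the paper itself, not less.

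One caution on your proposed resolution of that issue. Since $\tau\in\ho^0(X,\li^{\otimes N_1})$, it trivializes only $\li^{\otimes N_1}|_U$, so ``dividing by an appropriate $\tau$-power'' does not literally turn a section of $\li^{\otimes d}$ into a regular function on $U$ unless $N_1\mid d$. This is easy to repair: either also fix a trivialization of $\li|_U$ at the outset (possible after refining the cover), or note that because $p\nmid N_1$ the trivialization of $\li^{\otimes N_1}|_U$ by $\tau$ determines a unique compatible connection on $\li|_U$, through which $\partial_i$ then acts on sections of every $\li^{\otimes d}|_U$. With either patch your argument goes through and subsumes the paper's.
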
	
	\begin{proof}
		In fact, we can find $d_3>0$ such that each $\tau^{d_3}\cdot \partial_i$ can be extended to an element in 
		\[
			\ho^0\left( X, \mathcal{H}om(\Omega^1_{X/\F_q}, \mathcal{O}_X)\otimes \li^{N_1d_3} \right).
		\]
		Then the extended element induces the map in the statement. 
	\end{proof}
	\begin{rmq}
		The two result of extensions are not at all canonical. We can find a lot of extension maps satisfying the same conditions. But for our purpose, we always assume that we fix a choice of these extension maps implicitly.
	\end{rmq}
	
		Now for each $d>0$ and each $i$, we have the following composition of maps
		\[
			\ho^0(X,\mi\otimes \li^{\otimes d}) \longrightarrow \ho^0(X,\li^{\otimes (d+N_1d_2)}) \longrightarrow \ho^0(X,\li^{\otimes (d+N_1(d_2+d_3))})
		\]
		sending a section $\sigma\in \ho^0(X,\mi\otimes \li^{\otimes d})$ to $\tau^{d_2}\psi_U(\sigma|_U)\in\ho^0(X,\li^{\otimes (d+N_1d_2)})$ and then to 
		\[
			\tau^{d_3}\cdot \partial_i\left(\Big(\tau^{d_2}\psi_U(\sigma|_U)\Big)|_U\right)\in \ho^0(X,\li^{\otimes (d+N_1(d_2+d_3))}).
		\]
		We denote by $D_i$ this composition. So the above composition can be written as 
		\[
			D_i\ :\ \ho^0(X,\mi\otimes \li^{\otimes d}) \longrightarrow \ho^0(X,\li^{\otimes (d+N_1(d_2+d_3))})
		\]
		sending $\sigma$ to $D_i(\sigma)=\tau^{d_3}\cdot \partial_i\left(\Big(\tau^{d_2}\psi_U(\sigma|_U)\Big)|_U\right)$.

		 Note that $\psi_U$ is an isomorphism on $U$, and that $\tau$ does not vanish on any point of $U$. For a section $\sigma\in \ho^0(X,\mi\otimes \li^{\otimes d})$, if $\mathrm{div}\sigma$ is singular at a closed point $x\in \lvert U\rvert$, then so is $\mathrm{div}(\tau^{d_2}\psi_U(\sigma|_U))$. The last condition is then equivalent to 
		\[
			\tau^{d_2}\psi_U(\sigma|_U)(x)=0 \qquad \text{ and }\quad \partial_i\left(\Big(\tau^{d_2}\psi_U(\sigma|_U)\Big)|_U\right)(x)=0,\ i=1,\dots,n,
		\]
		which are equivalent to
		\[
			\sigma(x)=0 \qquad \text{ and }\quad D_1(\sigma)(x)=0,\ i=1,\dots,n.
		\]
		Therefore
		\[
			\mathrm{Sing}(\mathrm{div}\sigma)\cap U\subset \mathrm{Sing}\big(\mathrm{div}(\tau^{d_2}\psi_U(\sigma|_U))\big)=\mathrm{div}\sigma \cap \bigcap_{i=1}^n\mathrm{div}D_1(\sigma)\cap U.
		\]
		
		We may also choose $d'_4>0$ so that $\tau^{d'_4}\cdot \psi_U^{-1}$, and also all the sections $\tau^{d'_4}\cdot t_1\psi_U^{-1},\dots, \tau^{d'_4}\cdot t_n\psi_U^{-1}$ on $U$, extend to global morphisms $\li^{\otimes d}\longrightarrow \mi\otimes \li^{\otimes (d+N_1d'_4)}$ for each $d\geq 0$. Here $\psi_U^{-1}$ is the inverse of the trivialization map $\psi_U$. If $d'_4$ satisfies this condition, then so does any integer larger than $d'_4$. Now for $d\gg 1$, we choose $d_4$ to be the minimal integer satisfying $d_4\geq d'_4$ and $p | d-N_1d_4$. In particular, $d_4$ satisfies the condition for $d'_4$ and
		\[
			d'_4\leq d_4 <d'_4+p.
		\]
		Let $k_d=(d-N_1d_4)/p$. If we choose $\sigma_0\in \ho^0(X,\mi\otimes \li^{\otimes d})$, $\beta_1,\dots, \beta_n, \gamma\in \ho^0(X,\li^{\otimes k_d})$ uniformly at random, then 
		\[
			\sigma=\sigma_0+\sum_{i=1}^n\tau^{d_4}t_i\psi_U^{-1}(\beta_i^p)+\tau^{d_4}\psi_U^{-1}(\gamma^p)
		\]
		is a random element in $\ho^0(X,\mi\otimes \li^{\otimes d})$. For each $1\leq j\leq n$, we have
		\[
			D_j(\sigma)=D_j(\sigma_0)+\sum_{i=1}^nD_j\left(\tau^{d_4}t_i\psi_U^{-1}(\beta_i^p)\right)+D_j\left(\tau^{d_4}\psi_U^{-1}(\gamma^p)\right).
		\]
		When restricted to $U$, we have
		\begin{eqnarray*}
			D_j\left(\tau^{d_4}t_i\psi_U^{-1}(\beta_i^p)\right) &=& \tau^{d_3}\cdot \partial_j\left(\Big(\tau^{d_2}\psi_U(\tau^{d_4}t_i\psi_U^{-1}(\beta_i^p))\Big)|_U\right) \\
			&=& \tau^{d_3}\cdot \partial_j\left(\tau^{d_2+d_4}t_i\beta_i^p\right) \\
			&=& \tau^{d_3}t_i\beta_i^p\partial_j(\tau^{d_2+d_4})+\tau^{d_2+d_3+d_4}\beta_i^p\partial_j(t_i)+0 \\
			&=& \left\{\begin{array}{ll}
				\tau^{d_3}t_j\beta_j^p\partial_j(\tau^{d_2+d_4})+\tau^{d_2+d_3+d_4}\beta_j^p,\qquad i=j \\
				\tau^{d_3}t_i\beta_i^p\partial_j(\tau^{d_2+d_4}),\qquad\qquad\qquad\qquad\ \   i\not=j
				\end{array} \right. ,
		\end{eqnarray*}
		and 
		\begin{eqnarray*}
			D_j\left(\tau^{d_4}\psi_U^{-1}(\gamma^p)\right) &=&\tau^{d_3}\cdot \partial_j\left(\Big(\tau^{d_2}\psi_U(\tau^{d_4}\psi_U^{-1}(\gamma^p))\Big)|_U\right)\\
			&=&\tau^{d_3}\cdot \partial_j\left(\tau^{d_2+d_4}\gamma^p\right) \\
			&=&\tau^{d_3}\gamma^p\partial_j(\tau^{d_2+d_4}).
		\end{eqnarray*}
		Hence on $U$, we can write
		\begin{eqnarray*}
			D_j(\sigma) &=& D_j(\sigma_0)+\left(\sum_{i=1}^nt_i\beta_i^p+\gamma^p\right)\tau^{d_3}\partial_j(\tau^{d_2+d_4})+\tau^{d_2+d_3+d_4}\beta_j^p \\
			&=&D_j(\sigma_0)+(d_2+d_4)\left(\sum_{i=1}^nt_i\beta_i^p+\gamma^p\right)\tau^{d_2+d_3+d_4-1}\partial_j(\tau)+\tau^{d_2+d_3+d_4}\beta_j^p \\
			&=& D_j(\sigma_0)+(d_2+d_4)\Big(\psi_U(\sigma)-\psi_U(\sigma_0)\Big)\tau^{d_2+d_3-1}\partial_j(\tau)+\tau^{d_2+d_3+d_4}\beta_j^p.
		\end{eqnarray*}
		$ $\\
		
		For any element
		\[
			\big( \sigma_0, (\beta_1,\dots, \beta_n),\gamma \big) \in \ho^0(X,\mi\otimes \li^{d})\times \left(\prod_{i=1}^n\ho^0(X,\li^{\otimes k_d})\right)\times \ho^0(X,\li^{\otimes k_d}),
		\]
		we define
		\[
			g_{j}(\sigma_0,\beta_j)=D_j(\sigma_0)-(d_2+d_4)\psi_U(\sigma_0)\tau^{d_2+d_3-1}\partial_j(\tau)+\tau^{d_2+d_3+d_4}\beta_j^p \in \ho^0(U,\li^{\otimes (d+N_1(d_2+d_3))}),
		\]
		and
		\[
			W_{j}:=U\cap \{g_{1}=\cdots=g_{j}=0\}.
		\]
		Then for $\sigma=\sigma_0+\sum_{i=1}^n\tau^{d_4}t_i\psi_U^{-1}(\beta_i^p)+\tau^{d_4}\psi_U^{-1}(\gamma^p)$, comparing the expression of $g_{j}$ and $D_j(\sigma)$, which gives
		\[
			g_{j}(\sigma_0,\beta_j)=D_j(\sigma)-\psi_U(\sigma)(d_2+d_4)\tau^{d_2+d_3-1}\partial_j(\tau),
		\]
		we get
		\[
			g_{j}(\sigma_0,\beta_j)|_{U\cap\mathrm{div}\sigma}=D_j(\sigma)|_{U\cap\mathrm{div}\sigma}.
		\]
		Hence
		\[
			W_{j}\cap \mathrm{div}\sigma=\mathrm{div}\sigma\cap \mathrm{div}D_1(\sigma)\cap\cdots\cap \mathrm{div}D_j(\sigma)\cap U.
		\]
		In particular,
		\[
			\mathrm{Sing}(\mathrm{div}\sigma)\cap U\subset W_{n}\cap \mathrm{div}\sigma.
		\]
		
		As $\tau^{d_3}\cdot \partial_i$ extends to $X$, so does the section $\tau^{d_2+d_3-1}\partial_j(\tau)$. Therefore $g_j(\sigma_0,\beta_j)$ extends to an element in $\ho^0(X,\li^{\otimes (d+N_1(d_2+d_3))})$.
		
	\begin{lem}\label{need1}
		For $0\leq j\leq n-1$, with a fixed choice of $\sigma_0,\beta_1,\dots, \beta_j$ such that $\dim W_j\leq n-j$, the proportion of $\beta_{j+1}$ in $\ho^0(X,\li^{\otimes k_d})$ such that $\dim W_{j+1}\leq n-j-1$ is $1-o(1)$ as $d\rightarrow \infty$. Here $o(1)$ depends on $U$ and the $D_j$'s.
	\end{lem}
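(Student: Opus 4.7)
The plan is to follow Poonen's strategy from the proof of his Bertini theorem in \cite{Po04}, adapted to the present twisted setting. Since $W_{j+1}\subset W_j$, if $\dim W_j<n-j$ there is nothing to prove; so we may assume $\dim W_j=n-j$. Let $V_1,\dots,V_m$ denote the irreducible components of $(W_j)_{\mathrm{red}}$ of top dimension $n-j$, and let $\bar V_i\subset X$ be the Zariski closure of $V_i$. For $\dim W_{j+1}\leq n-j-1$ to fail, $g_{j+1}(\sigma_0,\beta_{j+1})$ must vanish identically on at least one $V_i$, so it suffices to bound the proportion of $\beta_{j+1}$ for which this happens.

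The key observation is that, in the formula for $g_{j+1}$, the parameter $\beta_{j+1}$ enters only through the term $\tau^{d_2+d_3+d_4}\beta_{j+1}^p$. Since $\tau$ is nowhere zero on $U\supset V_i$ and since the $p$-th power map is additive and injective on global sections (using that $\bar V_i$ is reduced and irreducible, and the local computation $(\beta-\beta^0)^p=\beta^p-(\beta^0)^p$ for sections of a line bundle in characteristic $p$), the set of $\beta_{j+1}\in\ho^0(X,\li^{\otimes k_d})$ making $g_{j+1}|_{V_i}\equiv 0$ is either empty or a coset of the kernel of the restriction map
\[
	\rho_i \,:\, \ho^0(X,\li^{\otimes k_d})\longrightarrow\ho^0(\bar V_i,\li^{\otimes k_d}).
\]
Hence the proportion of $\beta_{j+1}$ that are bad for $V_i$ is at most $(\#\Ima\,\rho_i)^{-1}$.

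To conclude, I would bound the number $m$ of top-dimensional components of $W_j$ polynomially in $d$ via a Bezout-type estimate applied to the $j$ equations $g_1,\dots,g_j$, whose degrees relative to $\li$ are linear in $d$. In parallel, since each $\bar V_i$ has positive dimension $n-j\geq 1$, one shows that $\#\Ima\,\rho_i$ grows at least like $q^{c\,k_d}$ for some constant $c>0$ independent of $i$, by restricting further to a closed point (or a curve of controlled degree) inside $\bar V_i$ and invoking uniform Serre vanishing for the very ample line bundle $\li$. Summing over $i$ yields a bound of the form $m\cdot q^{-c\,k_d}$ for the total proportion of bad $\beta_{j+1}$, and since $k_d\to\infty$ with $d$ this quantity is $o(1)$. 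The main obstacle lies in making the lower bound on $\#\Ima\,\rho_i$ truly uniform across the varying subschemes $\bar V_i$, which is the technical heart of the argument and is handled exactly as in the corresponding step of \cite[\S2]{Po04}.
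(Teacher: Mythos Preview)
Your outline matches the paper's proof closely: the coset argument for the bad set of $\beta_{j+1}$, the B\'ezout bound $O(d^j)$ on the number of top-dimensional components of $W_j$, and the reduction to a uniform lower bound on $\#\Ima\rho_i$ are exactly what the paper does.

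The one place where your description drifts is the mechanism for that uniform lower bound. Restricting to a single closed point cannot work, since then $\#\Ima\rho_i\le q^{\deg x}$ is bounded independently of $k_d$; and ``uniform Serre vanishing'' is the wrong tool, because the subvarieties $\bar V_i$ vary with $d$ and there is no fixed ideal sheaf to which Serre's theorem applies. The paper (following \cite{Po04}) instead gives a direct construction: pick any $\tau_e\in\ho^0(X,\li)$ not vanishing identically on $V_i$, and observe that the sections
\[
\tau_e^{k_d},\ \tau\,\tau_e^{k_d-N_1},\ \tau^2\tau_e^{k_d-2N_1},\ \dots,\ \tau^{\lfloor k_d/N_1\rfloor}\tau_e^{k_d-\lfloor k_d/N_1\rfloor N_1}
\]
have linearly independent restrictions to $V_i$ (using that $\tau$ is a unit on $U\supset V_i$), so $\dim\Ima\rho_i\ge k_d/N_1$. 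This gives the bound $q^{-k_d/N_1}$ per component, and since $k_d=(d-N_1d_4)/p$ with $d_4$ bounded, the product $O(d^j)\cdot q^{-k_d/N_1}=o(1)$. Your deferral to \cite[\S2]{Po04} lands you in the right place, but the concrete trick is worth internalizing since it sidesteps uniformity issues entirely.
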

	\begin{proof}
		Applying refined Bézout's theorem \cite[p.10]{Fu84}, the number of irreducible $(n-j)$-dimensional $\F_q$-components of $W_j$ is bounded by
		\[
			(\deg_{\li}X)(\deg g_{1})\cdots (\deg g_{j})=\deg_{\li}X\cdot  \left(d+N_1(d_2+d_3) \right)^j=O(d^j),
		\]
		when $d\rightarrow \infty$.
		
		On each irreducible $n-j$-dimensional $\F_q$-components $W_{j,e}$ of $W_j$, we need to bound the number of $\beta_{j+1}$ that makes $g_{j+1}(\sigma_0,\beta_{j+1})$ vanishing identically on $W_{j,e}$. We denote the subset of such $\beta_{j+1}$ in $\ho^0(X,\li^{\otimes k_d})$ by $G_{j,e}^{bad}$. Note that if $\beta_{j+1}$ and $\beta'_{j+1}$ are both contained in $G_{j,e}^{bad}$, then 
		\[
			g_{j+1}(\sigma_0,\beta_{j+1})-g_{j+1}(\sigma_0,\beta_{j+1})=\tau^{d_2+d_3+d_4}(\beta_{j+1}-\beta'_{j+1})^p
		\]
		vanishes identically on $W_{j,e}$. Therefore $\beta_{j+1}-\beta'_{j+1}$ is identically $0$ when restricted to $W_{j,e}$. So $G_{j,e}^{bad}$, if non-empty, is a coset of the subspace of sections in $\ho^0(X,\li^{\otimes k_d})$ vanishing on $W_{j,e}$. 
		
		Since $\li$ is very ample on $X$, we may find at least one section $\tau_e\in \ho^0(X,\li)$ that does not vanish identically on $W_{j,e}$. Then sections 
		\[
			\tau_e^{k_d}, \tau\tau_e^{k_d-N_1},\dots, \tau^{\lfloor\frac{k_d}{N_1}\rfloor}\tau_e^{k_d-\lfloor\frac{k_d}{N_1}\rfloor}\in \ho^0(X,\li^{\otimes k_d})
		\]
		have linearly independent images when restricting to $W_{j,e}$. Hence the subspace of sections in $\ho^0(X,\li^{\otimes k_d})$ vanishing on $W_{j,e}$ has codimension at least $\lfloor\frac{k_d}{N_1}\rfloor+1\geq \frac{k_d}{N_1}$. Therefore, as $k_d=(d-N_1d_4)/p$, we have
		\begin{eqnarray*}
			\frac{\#G_{j,e}^{bad}}{\#\ho^0(X,\li^{\otimes k_d})} &\leq& q^{-\frac{k_d}{N_1}}=q^{-\frac{d-N_1d_4}{pN_1}}.
		\end{eqnarray*}
		When $d\gg 1$, the proportion of $\beta_{j+1}$ such that $g_{j+1}(\sigma_0,\beta_{j+1})$ vanishes identically on at least one of the irreducible $n-j$-dimensional $\F_q$-components of $W_j$ is bounded by 
		\[
			O(d^j)\cdot q^{-\frac{d-N_1d_4}{pN_1}}=O(d^j q^{-\frac{d}{2pN_1}})=o(1).
		\]
		This proves the lemma.
	\end{proof}
	\begin{lem}\label{need2}
		With a fixed choice of $\sigma_0,\beta_1,\dots, \beta_n$ such that $W_n$ is finite, when $d\gg 1$, the proportion of $\gamma\in \ho^0(X,\li^{\otimes k_d})$ such that 
		\[
			\mathrm{div}\sigma\cap W_n\cap \left\{ x\in \lvert U\rvert \ ;\ \deg x>\frac{d-c}{1+n} \right\}=\emptyset
		\]
		is $1-o(1)$, where $\sigma=\sigma_0+\sum_{i=1}^n\tau^{d_4}t_i\psi_U^{-1}(\beta_i^p)+\tau^{d_4}\psi_U^{-1}(\gamma^p)$.
	\end{lem}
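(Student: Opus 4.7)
The plan is to apply a union bound over the (finitely many) closed points of $W_n$ whose degree exceeds $(d-c)/(n+1)$.

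First, I would invoke the refined B\'ezout theorem on $W_n=\{g_1=\cdots=g_n=0\}\cap U$, exactly as in the proof of Lemma \ref{need1}. Since each $g_j$ has $\li$-degree $d+N_1(d_2+d_3)$, this yields $\deg_{\li}W_n\leq (\deg_{\li}X)\prod_{j=1}^n \deg g_j=O(d^n)$, so $\sum_{x\in W_n}\deg x\leq O(d^n)$; in particular, for any $B>0$ the number of closed points of $W_n$ of degree exceeding $B$ is at most $O(d^n/B)$.

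Second, for a fixed $x\in W_n\subseteq U$, the relations $\tau(x)\neq 0$ and $\psi_U(x)$ an isomorphism allow one to expand $\sigma(x)=A_x+B_x\,\gamma(x)^p$, where $A_x\in\kappa(x)$ depends only on the fixed data $\sigma_0,\beta_1,\dots,\beta_n$ and $B_x\in\kappa(x)^{\times}$. Since $\kappa(x)$ is perfect of characteristic $p$, Frobenius is bijective, so the equation $\sigma(x)=0$ pins $\gamma(x)$ down to a single value $w_x\in\kappa(x)$, and the proportion of $\gamma\in\ho^0(X,\li^{\otimes k_d})$ with $\sigma(x)=0$ is therefore at most $|\mathrm{Im}(\mathrm{ev}_x)|^{-1}$, where $\mathrm{ev}_x\colon\ho^0(X,\li^{\otimes k_d})\to\kappa(x)$ is evaluation at $x$. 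The dimension-increase step in the proof of Lemma \ref{ressurj}, applied to $S=\{x\}$ with $\mi$ replaced by $\mathcal{O}_X$ (after enlarging $c$ to serve both sheaves), shows that $\dim_{\F_q}\mathrm{Im}(\mathrm{ev}_x)$ strictly increases by at least one each time $k_d$ is incremented, as long as it has not yet attained $\deg x$; hence
\[
\dim_{\F_q}\mathrm{Im}(\mathrm{ev}_x)\ \geq\ \min(k_d-c,\ \deg x),
\]
and the per-point proportion is at most $q^{-\min(k_d-c,\deg x)}$.

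Third, summing via a union bound and splitting according to whether $\deg x\leq k_d-c$ or $\deg x>k_d-c$, the first portion contributes at most
\[
\sum_{e>(d-c)/(n+1)}\frac{O(d^n)}{e}\cdot q^{-e}\ =\ O\!\left(d^{n-1}\,q^{-d/(n+1)}\right),
\]
while the second portion contains at most $O(d^n/k_d)=O(pd^{n-1})$ points, each contributing at most $q^{-(k_d-c)}$, for a total of $O(pd^{n-1}q^{-d/p})$. Both bounds are $o(1)$, and the lemma follows.

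The main technical obstacle I anticipate is extracting the linear-growth estimate $\dim_{\F_q}\mathrm{Im}(\mathrm{ev}_x)\geq k_d-c$ for closed points of $W_n$ with $\deg x>k_d-c$, which is precisely the regime that dominates when $p>n+1$. This will follow directly from the strict dimension-increase step in the proof of Lemma \ref{ressurj}, together with the fact that the $\F_q$-dimension of the image is unchanged under the finite base change of $\F_q$ used in that proof. The rest is bookkeeping with the B\'ezout count.
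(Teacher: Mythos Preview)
Your proof is correct and follows the same strategy as the paper: a B\'ezout bound on $W_n$, the observation that the bad set for each point is a coset of the kernel of evaluation, and a union bound. The only differences are bookkeeping: the paper obtains the per-point image bound by citing Lemma~2.5 of \cite{Po04} (yielding $q^{-\min(k_d+1,\,(d-c)/(n+1))}$ directly) and then uses the cruder count $\#W_n=O(d^n)$ with no case split, whereas you extract the same bound from the dimension-increase step of Lemma~\ref{ressurj} and do a degree-weighted split---a more self-contained but slightly longer route to the same $o(1)$.
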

	\begin{proof}
		Applying refined Bézout's theorem as in the proof of the above lemma, we know that $\#W_n=O(d^n)$. For a given $x\in\lvert W_n \rvert$, the subset $H_x^{bad}$ of $\gamma\in \ho^0(X,\li^{\otimes k_d})$ making $\sigma(x)=0$ is either empty or a coset of 
		\[
			\Ker \left( \ho^0(X,\li^{\otimes k_d})\longrightarrow \ho^0(x,\li^{\otimes k_d}) \right).
		\]
		Applying Lemma 2.5 of \cite{Po04}, where we need to consider $\ho^0(X,\li^{\otimes k_d})$ as $\ho^0\left(\pr\big(\ho^0(X,\li)^{\vee} \big), \Oc(k_d)\right)$ and then as degree $\leq k_d$ polynomials on an open affine subspace of $\pr\big(\ho^0(X,\li)^{\vee} \big)$ containing $x$, we get
		\[
			\frac{\#\Ker \left( \ho^0(X,\li^{\otimes k_d})\longrightarrow \ho^0(x,\li^{\otimes k_d}) \right)}{\#\ho^0(X,\li^{\otimes k_d})} \leq q^{-\min(k_d+1, \frac{d-c}{1+n})}.
		\]
		Hence 
		\[
			\frac{\#\left( \bigcup_{x\in\lvert W_n \rvert }H_x^{bad} \right)}{\#\ho^0(X,\li^{\otimes k_d})}\leq O(d^n)q^{-\min(k_d+1, \frac{d-c}{1+n})}=o(1),
		\]
		and the lemma follows.
	\end{proof}
	\begin{proof}[Proof of Lemma \ref{high}]
		We only need to show that $\overline{\mu}(\mathcal{Q}_U^{\mathrm{high}})=0$. If we choose 
		\[
			\big( \sigma_0, (\beta_1,\dots, \beta_n),\gamma \big) \in \ho^0(X,\mi\otimes \li^{d})\times \left(\prod_{i=1}^n\ho^0(X,\li^{\otimes k_d})\right)\times \ho^0(X,\li^{\otimes k_d}),
		\]
		randomly, then $\sigma=\sigma_0+\sum_{i=1}^n\tau^{d_4}t_i\psi_U^{-1}(\beta_i^p)+\tau^{d_4}\psi_U^{-1}(\gamma^p)$ is a random element in $\ho^0(X,\mi\otimes \li^{d})$. Lemma \ref{need1} and \ref{need2} show that when $d\rightarrow \infty$, the proportion of $\big( \sigma_0, (\beta_1,\dots, \beta_n),\gamma \big)$ such that $\dim W_j\leq n-j$ for each $1\leq j\leq n$ and 
		\[
			\mathrm{div}\sigma\cap W_n\cap \left\{ x\in \lvert U\rvert \ ;\ \deg x>\frac{d-c}{1+n} \right\}=\emptyset
		\]
		is
		\[
			\left(\prod_{j=1}^n (1-o(1))\right)\cdot (1-o(1))=1-o(1).
		\]
		As 
		\[
			\mathrm{Sing}(\mathrm{div}\sigma)\cap U\subset W_{n}\cap \mathrm{div}\sigma,
		\]
		we get that
		\[
			\frac{\#\mathcal{Q}_{U,d}^{\mathrm{high}}}{\#\ho^0(X,\mi\otimes \li^{d})}=1-(1-o(1))=o(1).
		\]
		Therefore $\overline{\mu}(\mathcal{Q}_U^{\mathrm{high}})=0$, and hence $\overline{\mu}(\mathcal{Q}^{\mathrm{high}})=0$, which is the result of Lemma \ref{high}.
	\end{proof}

\subsection{Conclusion}	
	
	\begin{proof}[Proof of Theorem \ref{generalbertini}]
		Note that for any $d\geq 0$, we have
		\[
			\mathcal{P}_d\subset \mathcal{P}_{\leq r, d}\subset \mathcal{P}_d\cup \mathcal{Q}_{r,d}^{\mathrm{med}}\cup \mathcal{Q}_{d}^{\mathrm{high}}.
		\]
		Hence
		\[
			\left\lvert \#\mathcal{P}_d-\mathcal{P}_{\leq r, d}\right\rvert\leq \#\mathcal{Q}_{r,d}^{\mathrm{med}}+\mathcal{Q}_{d}^{\mathrm{high}}.
		\]
		Dividing the inequality by $\#\ho^0(X,\mi\otimes \li^{d})$ and taking the limit for $d\rightarrow \infty$, we get that $\overline{\mu}(\mathcal{P})$ and $\underline{\mu}(\mathcal{P})$ differ from $\mu(\mathcal{P}_{\leq r})$ by at most $\overline{\mu}(\mathcal{Q}_{r,d}^{\mathrm{med}})+ \overline{\mu}(\mathcal{Q}^{\mathrm{high}})$. Taking $r\rightarrow \infty$ and applying Lemma \ref{small} and Lemma \ref{med} and \ref{high}, we obtain
		\[
			\mu(\mathcal{P})=\lim_{r\rightarrow \infty} \mu(\mathcal{P}_{\leq r})=\zeta_{X_0}(n+1)^{-1},
		\]
		which is what we need to show.
	\end{proof}
	\begin{proof}[Proof of Corollary \ref{generalbertinisingularity}]
		Assuming that $S$ is a finite subscheme of $Y$ as in Corollary \ref{generalbertinisingularity}, we prove as in the proof of Theorem \ref{generalbertini}, applying that for any $d\geq 0$,
		\[
			\mathcal{P}_{S,d}\subset \mathcal{P}_{S,\leq r, d}\subset \mathcal{P}_{S,d}\cup \mathcal{Q}_{r,d}^{\mathrm{med}}\cup \mathcal{Q}_{d}^{\mathrm{high}}.
		\]
		By the same argument as above, we get
		\[
			\mu(\mathcal{P}_S)=\lim_{r\rightarrow \infty} \mu(\mathcal{P}_{S,\leq r})=\zeta_{X_0}(n+1)^{-1}\cdot\prod_{x\in \lvert S\rvert} \left( 1-\#\kappa(x)^{-1}\right).
		\]
	\end{proof}

\section{Smoothing procedure}\label{smoothing}

To prove Theorem \ref{1cycle}, it's enough to prove the following proposition :

\begin{prop}\label{final1cycle}
	Let $X$ be a smooth projective variety of dimension $n\geq 2$ over $\F_q$, and $\li$ a very ample line bundle over $X$. If $Z$ is a singular closed curve on $X$, then we can find smooth closed curves $Z_1$ and $Z_2$ on $X$ such that 
	\[
		Z\sim_{\mathrm{rat}} Z_1-Z_2.
	\]
\end{prop}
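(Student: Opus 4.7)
The plan is to realize $Z$ as the difference $Z_1-Z_2$ of two smooth closed curves obtained as cycle components of matching complete intersections. Concretely, for sufficiently large integers $d_1,\ldots,d_{n-1}$ I would produce two tuples of divisors $(H_j)$ and $(H'_j)$ with $H_j, H'_j \in |\li^{\otimes d_j}|$ satisfying
\[
    H_1 \cap \cdots \cap H_{n-1}\ =\ Z\cup Z_2, \qquad H'_1 \cap \cdots \cap H'_{n-1}\ =\ Z_1,
\]
with $Z_1, Z_2$ smooth closed curves on $X$ and $H_1\cdots H_{n-1}=[Z]+[Z_2]$ as cycles. Linear equivalence $H_j\sim H'_j$ inside $|\li^{\otimes d_j}|$ then forces $[Z]+[Z_2]\sim_{\mathrm{rat}}[Z_1]$, yielding $Z\sim_{\mathrm{rat}}Z_1-Z_2$, as required.

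The tuple $(H'_j)$, producing the smooth curve $Z_1$, is the easy half: I would iteratively apply Poonen's Bertini theorem from \cite{Po04}, starting from the smooth variety $X$ and successively passing to each smooth complete intersection already built. For $d_j$ large enough, a positive proportion of sections of $\li^{\otimes d_j}$ keeps the intersection smooth of the expected dimension, and after $n-1$ steps we obtain a smooth curve $Z_1$.

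The tuple $(H_j)$, producing the cycle $[Z]+[Z_2]$, is built inductively with the extra requirement that $Z\subset H_j$ for every $j$ and that $Y_j:=H_1\cap\cdots\cap H_j$ has dimension $n-j$ and is smooth on $X-\mathrm{Sing}(Z)$. For $j\leq n-2$ this uses Poonen's density result for hypersurfaces containing a prescribed subscheme from \cite{Po08}, applied on the smooth locus of $Y_{j-1}$: a positive proportion of sections of $\li^{\otimes d_j}$ vanishing along $Z$ cuts out an $H_j$ with $Y_j$ smooth outside $\mathrm{Sing}(Z)$. Since both this density and the one used in the previous paragraph are positive for every large $d_j$, the degrees $d_j$ can be matched across the two constructions. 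At the end of the induction, $Y_{n-2}$ is a projective surface containing $Z$, smooth on $X-\mathrm{Sing}(Z)$, but possibly singular at the finite set $\mathrm{Sing}(Z)$.

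The main obstacle is the final step: one cannot apply Poonen's smoothness theorem directly to $Y_{n-2}$, since this surface is only smooth away from $\mathrm{Sing}(Z)$. This is exactly the situation Corollary~\ref{generalbertinisingularity} was designed for. Applying it to the projective variety $Y_{n-2}$ with smooth open $X_0=Y_{n-2}-\mathrm{Sing}(Z)$ and finite subscheme $S=\mathrm{Sing}(Z)$, combined with the density of sections of $\li^{\otimes d_{n-1}}$ vanishing along the smooth part $Z-\mathrm{Sing}(Z)$ of $Z$, produces an $H_{n-1}\in|\li^{\otimes d_{n-1}}|$ such that $H_{n-1}\cdot Y_{n-2}=[Z]+[Z_2]$ with $Z_2$ a smooth curve disjoint from $\mathrm{Sing}(Z)$. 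Because $Z_2$ then lies entirely in the smooth locus of $Y_{n-2}$, it is a smooth closed curve on $X$, and this completes the argument.
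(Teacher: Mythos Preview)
Your overall strategy matches the paper's proof: build the surface $Y_{n-2}\supset Z$ via iterated applications of Theorem~\ref{poonen}, build the smooth curve $Z_1$ via iterated applications of Poonen's \cite{Po04} with matching degrees, and then handle the last cut on the surface using Corollary~\ref{generalbertinisingularity}. The gap is in how you execute this last step.

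You propose to ``combine'' Corollary~\ref{generalbertinisingularity} (applied to $Y_{n-2}$ with $S=\mathrm{Sing}(Z)$) with ``the density of sections of $\li^{\otimes d_{n-1}}$ vanishing along $Z-\mathrm{Sing}(Z)$.'' Read literally, these two conditions are incompatible: any section of $\li^{\otimes d_{n-1}}$ vanishing on the dense open $Z-\mathrm{Sing}(Z)$ vanishes on all of $Z$, hence on $\mathrm{Sing}(Z)$, so its divisor meets $S$ and violates the condition $\mathrm{div}\sigma\cap S=\emptyset$ in the corollary. Dropping the avoidance does not help either: on the surface $Y_{n-2}$ with the curve $Z$ you are in case ii) of Theorem~\ref{poonen} ($m=2=2l$), so the density of sections vanishing on $Z$ with smooth divisor is zero; and Corollary~\ref{generalbertinisingularity} with trivial $\mi$ would ask the \emph{whole} divisor $Z\cup Z_2$ to be smooth on $Y_{n-2}-\mathrm{Sing}(Z)$, which fails wherever $Z$ and $Z_2$ meet.

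The missing idea---and the reason Theorem~\ref{generalbertini} allows an arbitrary twist $\mi$---is to apply Corollary~\ref{generalbertinisingularity} on $Y_{n-2}$ with $\mi=\Oc(-Z)$. Since $Y_{n-2}$ is smooth away from finitely many points, $Z$ is Cartier on $Y_{n-2}$ and $\Oc(-Z)$ is a line bundle. For $\beta\in\ho^0\bigl(Y_{n-2},\Oc(-Z)\otimes\li^{\otimes d_{n-1}}\bigr)$ one has $\mathrm{div}\beta=Z_2$ equal to the \emph{residual} curve directly, and the corollary then yields $\beta$ with $Z_2$ smooth on $Y_{n-2}-\mathrm{Sing}(Z)$ and $Z_2\cap\mathrm{Sing}(Z)=\emptyset$, hence $Z_2$ smooth. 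Multiplying by a defining section $\sigma_Z\in\ho^0(Y_{n-2},\Oc(Z))$ gives $\sigma_Z\cdot\beta\in\ho^0(Y_{n-2},\li^{\otimes d_{n-1}})$ with divisor $Z\cup Z_2$, which one lifts to $\sigma_{n-1}\in\ho^0(X,\mathcal{I}_Z\otimes\li^{\otimes d_{n-1}})$ for $d_{n-1}$ large. This is precisely what the paper does, and it is the step your sketch does not supply.
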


In fact, any $1$-cycle on $X$ is of the form $Z=\sum_i a_iZ_i$ where each $a_i$ is an integer and each $Z_i$ is the class of a closed curve on $X$. To find a smooth $1$-cycle on $X$ rationally equivalent to $Z$, it is enough to find a smooth $1$-cycle on $X$ rationally equivalent to $Z_i$ for each singular component $Z_i$ of $Z$. Therefore Proposition \ref{final1cycle} implies Theorem \ref{1cycle}.
\\

To prove Proposition \ref{final1cycle}, we will use the following equivalent form of Theorem 1.1 in \cite{Po08} :
\begin{thm}[Theorem 1.1 in \cite{Po08}]\label{poonen}
	Let $X$ be a projective scheme of dimension $n$ over $\F_q$, and $X_0$ a smooth quasi-projective subscheme of $X$ of dimension $m$. Let $\li$ be a very ample line bundle on $X$. Let $Z$ be a closed subscheme of $X$. Assume that the scheme-theoretic intersection $Z_0=Z\cap X_0$ is smooth of dimension $l$ (set $l=-1$ if $Z_0=\emptyset$). Define
	\[
		\mathcal{P}=\bigcup_{d\geq 0}\left\{ \sigma\in \ho^0(X,\mathcal{I}_Z\otimes \li^{\otimes d})\ ;\ \mathrm{div}\sigma\cap X_0 \text{ is smooth of dimension } m-1 \right\},
	\]
	where $\mathcal{I}_Z$ is the ideal sheaf of $Z$ in $X$.
	\begin{enumerate}[label=\roman*)]
		\item If $m>2l$, then
		\[
			\mu_Z(\mathcal{P}):=\lim_{d\rightarrow \infty}\frac{\#\left(\mathcal{P}\cap \ho^0(X,\mathcal{I}_Z\otimes \li^{\otimes d})\right)}{\#\ho^0(X,\mathcal{I}_Z\otimes \li^{\otimes d})}=\zeta_{Z_0}(m-l)^{-1}\zeta_{X_0-Z_0}(m+1)^{-1}.
		\]
		In this case, when $d\gg1$, we may find a section $\sigma\in \ho^0(X,\mathcal{I}_Z\otimes \li^{\otimes d})$ such that $\mathrm{div}\sigma$ contains $Z$ and $\mathrm{div}\sigma\cap X_0$ is smooth of dimension $m-1$.
		\item If $m\leq 2l$, then $\mu_Z(\mathcal{P})=0$.
	\end{enumerate}
\end{thm}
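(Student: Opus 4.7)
The plan is to adapt the three-tier low/medium/high-degree analysis of Section \ref{generalber} to the setting where $\sigma$ is constrained to lie in $\mathcal{I}_Z\otimes\li^{\otimes d}$ and where smoothness is tested inside $X_0$ rather than on all of $X$. The key geometric input is that, since $Z_0$ is smooth of dimension $l$ inside the smooth $m$-dimensional $X_0$, one may choose local regular parameters $t_1,\dots,t_m$ on $X_0$ near any $x\in |Z_0|$ so that $Z_0$ is cut out by $t_{l+1},\dots,t_m$.

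For the low-degree contribution I would replace the single factor in Lemma \ref{small} by a product split according to whether $x\in |X_0-Z_0|$ or $x\in |Z_0|$. At $x\in |X_0-Z_0|$, $\mathcal{I}_Z$ agrees with $\Oc_X$ locally, so the usual argument via surjectivity of $\ho^0(X,\mathcal{I}_Z\otimes\li^{\otimes d})\to \ho^0(x',\li^{\otimes d})$ for $d\gg 1$ (same line as Lemma \ref{ressurj}) yields a failure probability of $q^{-(m+1)\deg x}$. At $x\in |Z_0|$, the image of sections of $\mathcal{I}_Z$ in the first-order neighborhood $\Oc_{x'}$ of $x$ inside $X_0$ is the $\kappa(x)$-subspace generated by $t_{l+1},\dots,t_m$ modulo $\mathfrak{m}_x^2$, which has $\kappa(x)$-dimension $m-l$; restriction onto this subspace is still surjective for $d\gg 1$, so the failure probability is $q^{-(m-l)\deg x}$. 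Multiplying over all closed points of degree $\le r$ and letting $r\to\infty$ produces the target product $\zeta_{Z_0}(m-l)^{-1}\zeta_{X_0-Z_0}(m+1)^{-1}$.

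The medium-degree estimate over points of $|X_0-Z_0|$ is identical to Lemma \ref{med} with $n$ replaced by $m$ and gives a bound of $O(q^{-r})$. The new ingredient is the contribution from points of $|Z_0|$: by Lang-Weil applied to the smooth $l$-dimensional $Z_0$ combined with the per-point failure probability $q^{-(m-l)\deg x}$, this contribution is bounded by $O\bigl(\sum_{e>r}q^{(2l-m)e}\bigr)$, which tends to $0$ as $r\to\infty$ precisely when $m>2l$. This is where the dimension hypothesis of part (i) becomes indispensable. The high-degree part is then handled by the derivative trick of Lemma \ref{high} applied on open affines $U\subset X_0$ trivializing $\Omega^1_{X_0/\F_q}$: on opens disjoint from $Z_0$ the argument of Lemmas \ref{need1}--\ref{need2} goes through essentially unchanged, while on opens meeting $Z_0$ one parametrizes the random perturbations $\beta_i,\gamma$ only along the directions $t_{l+1},\dots,t_m$ so that each summand of $\sigma_0+\sum\tau^{d_4}t_i\psi_U^{-1}(\beta_i^p)+\tau^{d_4}\psi_U^{-1}(\gamma^p)$ still lies in $\mathcal{I}_Z\otimes\li^{\otimes d}$.

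For part (ii) no medium/high-degree analysis is needed: $\mu(\mathcal{P})$ is bounded above by the low-degree product $\prod_{x\in|Z_0|,\ \deg x\le r}(1-q^{-(m-l)\deg x})$ for every $r$, so letting $r\to\infty$ gives $\mu(\mathcal{P})\le \prod_{x\in|Z_0|}(1-q^{-(m-l)\deg x})=0$, because the series $\sum_{x\in|Z_0|}q^{-(m-l)\deg x}$ diverges whenever $m-l\le l=\dim Z_0$ by Lang-Weil. The main obstacle I foresee is carrying through the high-degree derivative argument of Lemma \ref{high} on open affines intersecting $Z_0$: the Bézout-type bounds of Lemma \ref{need1} have to be redone within the restricted space of perturbations that preserve vanishing on $Z$, and the auxiliary section $\tau$ together with the extension integers $d_2,d_3,d_4$ must be chosen so that every intermediate map lands in the appropriate twist of $\mathcal{I}_Z$ rather than of $\Oc_X$.
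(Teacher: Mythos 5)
Your proposal takes a genuinely different route from the paper. The paper does not reprove Theorem~\ref{poonen} at all: the remark following the statement simply reduces it to the original Theorem~1.1 of \cite{Po08} by choosing the projective embedding $X\hookrightarrow \pr^N=\pr\bigl(\ho^0(X,\li)^\vee\bigr)$ induced by $\li$, identifying $\ho^0(X,\mathcal{I}_Z\otimes\li^{\otimes d})$ with a quotient of $\ho^0(\pr^N,\mathcal{I}'_Z\otimes\Oc(d))$ for $d\gg1$, and observing that the densities agree because the fibers of this surjection have constant size and the sections vanishing identically on $X$ form a subset of density zero. What you propose instead is a from-scratch closed-point sieve that is essentially a reconstruction of Poonen's own argument in \cite{Po08}, transplanted into the notational framework of Section~\ref{generalber}. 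Your low-degree analysis (splitting into $X_0-Z_0$ and $Z_0$, with respective local failure probabilities $q^{-(m+1)\deg x}$ and $q^{-(m-l)\deg x}$ coming from the rank of the restriction of $\mathcal{I}_Z$ to the first-order neighborhood), your medium-degree bound $O\bigl(\sum_{e>r}q^{(2l-m)e}\bigr)$ over $Z_0$ via Lang--Weil, and your part~(ii) divergence argument are all correct and are exactly where the hypothesis $m>2l$ enters. The trade-off is clear: the paper's reduction is a five-line remark that offloads all work to \cite{Po08}, while your route gives a self-contained proof but requires redoing the hardest part of \cite{Po08}.

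That hardest part is precisely where your sketch has a genuine gap. You propose to reuse the perturbation
\[
\sigma=\sigma_0+\sum_{i}\tau^{d_4}t_i\psi_U^{-1}(\beta_i^p)+\tau^{d_4}\psi_U^{-1}(\gamma^p),
\]
"parametrizing $\beta_i,\gamma$ only along the directions $t_{l+1},\dots,t_m$." Restricting the index $i$ to $l+1,\dots,m$ does keep the $\beta_i$-summands in $\mathcal{I}_Z$ locally, since $t_{l+1},\dots,t_m$ generate $\mathcal{I}_{Z_0}$ near $Z_0$. But the constant-term perturbation $\tau^{d_4}\psi_U^{-1}(\gamma^p)$ does not lie in $\mathcal{I}_Z\otimes\li^{\otimes d}$ for a generic $\gamma$, and there is no direction you can "restrict $\gamma$ along" to fix this: $\gamma$ is a global section of $\li^{\otimes k_d}$, not a local function, and $\gamma^p$ need not vanish on $Z$. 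This term cannot simply be dropped either, because it is what controls the value $\sigma(x)$ in Lemma~\ref{need2}; at points of $U\cap Z_0$ the value condition is automatic (since $\sigma\in\mathcal{I}_Z$), but at points of $U\setminus Z_0$ it is not, and one needs a perturbation that moves $\sigma(x)$ while staying in $\mathcal{I}_Z$. Poonen resolves this in \cite{Po08} by a more elaborate perturbation built from local generators of $\mathcal{I}_Z$, and by splitting the high-degree locus according to whether the bad point lies on $Z_0$ or not; your sketch names the obstacle but does not supply the replacement. Until that is filled in, the high-degree bound, and hence the limit in part~(i), is not established by your argument.
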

\begin{rmq}
The original statement is the case when $X$ is a projective space $\pr^n$ and $\li$ is the sheaf $\Oc(1)$. The theorem stated in this form is implied by the original one. In fact, as $\li$ is very ample, it induces an embedding of $X$ into a projective space $\pr^N=\pr(\ho^0(X,\li)^{\vee})$, where $\li$ is the restriction of $\Oc(1)$ on $X$. To distinguish the two ideal sheaves of $Z$ in $X$ and in $\pr^N$, we denote by $\mathcal{I}'_Z$ the ideal sheaf of $Z$ in $\pr^N$. When $d\gg1$, we have
\[
	\ho^0(X,\mathcal{I}_Z\otimes \li^{\otimes d})\simeq \ho^0(\pr^N,\mathcal{I}'_Z\otimes \Oc(d)),
\]
and the proportion calculated on the two sides are equal. 
\end{rmq}

\begin{proof}[Proof of Proposition \ref{final1cycle}]
	
	Let $Z_0=Z-\mathrm{Sing}(Z)$ be the smooth part of $Z$. Set $X_0=X-\mathrm{Sing}(Z)$. 
	
	If $n=2$, the curve $Z$ determines a line bundle $\Oc(Z)$ on $X$. We may apply Corollary \ref{generalbertinisingularity} directly to the case $Y=\mathrm{Sing}(Z),S=\mathrm{Sing}(Z), \mi=\Oc(-Z)=\Oc(Z)^{\vee}$. Then when $d\gg 1$, we can always find a section $\beta\in \ho^0(X,\Oc(-Z)\otimes \li^{\otimes d})$ such that $\mathrm{div}\beta$, also denoted by $Z_{\beta}$, is a smooth curve satisfying $\mathrm{div}\beta\cap \mathrm{Sing}(Z)=\emptyset$. Let $\sigma_Z\in \ho^0(X,\Oc(Z))$ be a defining section of $Z$, i.e. $Z=\mathrm{div}\sigma_Z$. Then $\sigma:=\sigma_Z\cdot \beta\in \ho^0(X,\mathcal{I}_Z\otimes \li^{\otimes d})$ is such that 
	\[
		\mathrm{div}\sigma=Z\cup Z_\beta.
	\]
	We may assume that the $d$ we choose is large enough so that we can also find $\sigma'\in \ho^0(X,\li^{\otimes d})$ such that $\mathrm{div}\sigma'$ is a smooth curve on $X$. Then of course $\mathrm{div}\sigma'$ is rationally equivalent to the cycle $Z+Z_{\beta}$, and hence $Z$ is rationally equivalent to $\mathrm{div}\sigma'-Z_{\beta}$, where $\mathrm{div}\sigma'$ and $Z_\beta$ are both smooth. Proposition \ref{final1cycle} then follows, setting $Z_1=\mathrm{div \sigma'}$ and $Z_2=Z_\beta$.\\

	We may therefore assume $n\geq 3$. Theorem \ref{poonen} tells us that when $d_1\gg 1$, we can always find $\sigma_1\in \ho^0(X,\mathcal{I}_Z\otimes \li^{\otimes d_1})$ such that $\mathrm{div}\sigma_1$ contains $Z$ and $\mathrm{div}\sigma_1\cap X_0$ is smooth of dimension $n-1$. We may choose $d_1$ large enough so that we can also find a section $\sigma'_1\in \ho^0(X,\li^{\otimes d_1})$ whose divisor $\mathrm{div}\sigma'_1$ is smooth of dimension $n-1$ by Theorem 1.1 in \cite{Po04}. 
	
	We then continue to choose $d_2\gg 1$ such that applying Theorem \ref{poonen} to $\mathrm{div}\sigma_1\cap X_0$ instead of $X_0$, there exists $\sigma_2\in \ho^0(X,\mathcal{I}_Z\otimes \li^{\otimes d_2})$ such that $\mathrm{div}\sigma_2$ contains $Z$ and $X_0\cap \mathrm{div}\sigma_1\cap \mathrm{div}\sigma_2$ is smooth of dimension $n-2$. This $d_2$ can also be chosen large enough so that there exists $\sigma'_2\in \ho^0(X,\li^{\otimes d_2})$ such that $\mathrm{div}\sigma'_1\cap \mathrm{div}\sigma'_1$ is smooth of dimension $n-2$, also by Theorem 1.1 in \cite{Po04}.
	
	We may continue this procedure until we have chosen $d_1,\dots, d_{n-2}\in\Z_{>0}$, sections $\sigma_1,\dots,\sigma_{n-2}$ and $\sigma'_1,\dots,\sigma'_{n-2}$ where $\sigma_i\in \ho^0(X,\mathcal{I}_Z\otimes \li^{\otimes d_i})$ and $\sigma'_i\in \ho^0(X,\li^{\otimes d_i})$ such that 
	\begin{enumerate}[$\bullet$]
		\item $\mathrm{div}\sigma_1\cap\cdots\cap \mathrm{div}\sigma_{n-2}$ contains $Z$,
		\item $X_0\cap \mathrm{div}\sigma_1\cap\cdots\cap \mathrm{div}\sigma_{n-2}$ is smooth of dimension $2$, and
		\item $\mathrm{div}\sigma'_1\cap\cdots\cap \mathrm{div}\sigma'_{n-2}$ is smooth of dimension $2$.
	\end{enumerate}

	Now $\mathrm{div}\sigma_1\cap\cdots\cap \mathrm{div}\sigma_{n-2}$ is a projective surface containing $Z$ such that $X_0\cap \mathrm{div}\sigma_1\cap\cdots\cap \mathrm{div}\sigma_{n-2}$ is smooth. Denote this surface $\mathrm{div}\sigma_1\cap\cdots\cap \mathrm{div}\sigma_{n-2}$ by $S$. Then $\li|_S$ is a very ample line bundle on $S$ and $S-\mathrm{Sing}(Z)=X_0\cap S$ is smooth. As $S$ is smooth outside of finitely many points, $Z$ still determines a line bundle $\Oc(Z)$ on $S$.

	We apply Corollary \ref{generalbertinisingularity} to the surface $S$, where $X, Y, S, \li,\mi$ in the statement of the Corollary are replaced by $S, \mathrm{Sing}(Z), \mathrm{Sing}(Z), \li|_S, \Oc(-Z)$, respectively. In particular, we can choose a $d_{n-1}\gg 1$ and find a section $\beta\in\ho^0(S,\Oc(-Z)\otimes (\li^{\otimes d_{n-1}})|_S)$ such that $\mathrm{div}\beta$, also denoted by $Z_{\beta}$, is a smooth curve satisfying $\mathrm{div}\beta\cap\mathrm{Sing}(Z)=\emptyset$. Let $\sigma_Z\in \ho^0(S, \Oc(Z))$ be a defining section of $Z$ in $S$. Then $\sigma_Z\cdot\beta\in \ho^0(S,(\mathcal{I}_Z\otimes \li^{\otimes d_{n-1}})|_S)$ is a section such that
	\[
		\mathrm{div}(\sigma_Z\cdot\beta)=Z\cup Z_\beta.
	\]
	In particular, as $Z_{\beta}$ is smooth, we deduce $Z_{\beta}\not=Z$.

	Moreover, we may assume that $d_{n-1}$ is large enough so that 
	\[
		\ho^0(X,\mathcal{I}_Z\otimes \li^{\otimes d_{n-1}})\longrightarrow \ho^0(S,(\mathcal{I}_Z\otimes \li^{\otimes d_{n-1}})|_S)
	\]
	is surjective, and that we can find a $\sigma'_{n-1}\in\ho^0(X, \li^{\otimes d_{n-1}})$ such that $\mathrm{div}\sigma'_1\cap\cdots\cap \mathrm{div}\sigma'_{n-1}$ is a smooth curve in $X$. Choose a pre-image $\sigma_{n-1}$ of $\sigma_Z\cdot\beta$ in $\ho^0(X,\mathcal{I}_Z\otimes \li^{\otimes d_{n-1}})$. We then have 
	\begin{enumerate}[$\bullet$]
		\item $\mathrm{div}\sigma_1\cap\cdots\cap \mathrm{div}\sigma_{n-1}=Z\cup Z_{\beta}$ where $Z_{\beta}$ is a smooth curve,
		\item $Z_{\beta}\cap\mathrm{Sing}(Z)=\emptyset$, and
		\item $\mathrm{div}\sigma'_1\cap\cdots\cap \mathrm{div}\sigma'_{n-1}$ is smooth of dimension $1$.
	\end{enumerate}
	Set $\mathrm{div}\sigma'_1\cap\cdots\cap \mathrm{div}\sigma'_{n-1}=Z_1$ and $Z_{\beta}=Z_2$. As for each $1\leq i\leq n-1$, $\sigma_i$ and $\sigma'_i$ are both global sections in $\ho^0(X, \li^{\otimes d_i})$, $Z+Z_2$ is rationally equivalent to $Z_1$ as algebraic $1$-cycles. So $Z$ is rationally equivalent to $Z_1-Z_2$ and the conclusion of Proposition \ref{final1cycle} follows.
\end{proof}


\end{document}